\newtheorem{teor}{Theorem}
\newtheorem{cor}{Corollary}
\newtheorem{prop}{Proposition}
\newtheorem{lem}{Lemma}
\newtheorem{conj}{Conjecture}
\theoremstyle{definition}
\newtheorem*{rem}{Remark}
\author{Antonio M. Oller-Marc\'{e}n}
\author{Jos\'{e} Mar\'{i}a Grau}
\title{On the base $b$ expansion of the number of trailing zeroes of $b^k!$}
\begin{document}

\begin{abstract}
Let us denote by $Z_{b}(n)$ the number of trailing zeroes in the base b
expansion of $n!$. In this paper we study the connection between
the expression of $\vartheta(b):=\lim_{n\rightarrow \infty}Z_{b}(n)/n$ in base $b$, and that of $Z_{b}(b^{k})$.

In particular, if $b$ is a prime power, we will show the equality
between the $k$ digits of $Z_{b}(b^{k})$ and the first $k$ digits in
the fractional part of $\vartheta (b)$. In the general case we will
see that this equality still holds except for, at most, the last
$\left\lfloor \log _{b}(k)\ +3\right\rfloor $ digits. We finally
show that this bound can be improved if $b$ is square-free and
present some conjectures about this bound.
\end{abstract}
\maketitle
\section{Introduction}
It is a classic topic in Elementary Number Theory to compute the
number of trailing zeroes of the base 10 expansion of the factorial
of an integer (see \cite{GRI}, \cite{GUP} or \cite{NIV} for
instance). In fact, this question can be extended to arbitrary bases
and in \cite{OLL} the behavior of the function $Z_b$ was studied in
detail.

In recent work (see \cite{HAR}, motivated by \cite{SCH}), the following particularity about the number of trailing zeroes in the factorial of the powers of 10 was shown:
\begin{align*}
Z_{10}(10)&=2,\\ Z_{10}(10^{2})&=24,\\ Z_{10}(10^{3})&=249,\\ Z_{10}(10^{4})&=2499,\\ Z_{10}(10^{5})&=24999,\\ Z_{10}(10^{6})&=249998,\\ Z_{10}(10^{7})&=2499999,\\ Z_{10}(10^{8})&=24999999,\\ Z_{10}(10^{9})&=249999998,\\ Z_{10}(10^{10})&=2499999997.
\end{align*}

Although the sequence of final 9's is broken for some values of the
exponent, it seems clear that the number of 9's grows indefinitely.
For instance:
\begin{align*}
Z_{10}(10^{50})=&24999999999999999999999999999999999999999999999989,\\
Z_{10}(10^{100})
=&2499999999999999999999999999999999999999999999999999999999\\
&999999999999999999999999999999999999999982.
\end{align*}

This behavior and the fact that the number of digits of
$Z_{10}(10^m)$ is $m$, motivated the study of the asymptotic
behavior of $\displaystyle{\frac{Z_{10}(10^k)}{10^k}}$ in
\cite{TRE}, where it was shown (as suggested by the previous
examples) that:
\begin{equation*}
\lim_{n\rightarrow\infty}\frac{Z_{10}(n)}{n}=\frac{1}{4}.
\end{equation*}
Also, in \cite{TRE} was suggested (and was proved in \cite{HAR})
that if $b=p_{1}^{r_{1}}\cdots p_{s}^{r_{s}}$, then:
\begin{equation*}
\lim_{n\rightarrow \infty }\frac{Z_{b}(n)}{n}=\min_{1\leq i\leq
s}\frac{1}{r_{i}(p_{i}-1)}.
\end{equation*}

Now, it is natural to wonder about what will be the behavior of the
digits of $Z_b(b^k)$ for other values of $b$. Let us have a look at
the case $k=20$ for various values of $b$:
\begin{align*}
Z_{2}(2^{20})=& 1743392200\\
Z_{3}(3^{20})=& 549755813887\\
Z_{4}(4^{20})=& 23841857910156\\
Z_{5}(5^{20})=& 1828079220031481\\
Z_{6}(6^{20})=& 13298711049602000\\
Z_{7}(7^{20})=& 384307168202282325\\
Z_{8}(8^{20})=& 384307168202282325\\
Z_{9}(9^{20})=& 3039416364764232200\\
Z_{10}(10^{20})=& 24999999999999999996\\
Z_{11}(11^{20})=& 67274999493256000920\\
Z_{12}(12^{20})=& 1916879996223737561074\\
Z_{13}(13^{20})=& 1583746981240066619900\\
Z_{14}(14^{20})=& 13944709237547466926759\\
Z_{15}(15^{20})=& 83131418251991271972652\\
Z_{16}(16^{20})=& 302231454903657293676543\\
Z_{17}(17^{20})=& 254014462915473282650100\\
Z_{18}(18^{20})=& 3187059054099019543609340\\
Z_{19}(19^{20})=& 2088331858752553232964200\\
Z_{20}(20^{20})=& 26214399999999999999999991\\
Z_{21}(21^{20})=& 46369738241158591439532728\\
Z_{30}(30^{20})=&87169610024999999999999999987
\end{align*}

In the light of this data, it may seem that the only interesting
behavior takes place at the multiples of 10. Nevertheless, this is
not the case, as can be seen having a look at the base $b$ expansion
of the considered number:
\begin{align*}
Z_{2}(2^{20})&=\{1,1,1,1,1,1,1,1,1,1,1,1,1,1,1,1,1,1,1,1\}_{2}\\Z_{3}(3^{20})&=\{1,1,1,1,1,1,1,1,1,1,1,1,1,1,1,1,1,1,1,1\}_{3}\\Z_{4}(4^{20})&=\{1,3,3,3,3,3,3,3,3,3,3,3,3,3,3,3,3,3,3,3\}_{4}\\Z_{5}(5^{20})&=\{1,1,1,1,1,1,1,1,1,1,1,1,1,1,1,1,1,1,1,1\}_{5}\\Z_{6}(6^{20})&=\{2,5,5,5,5,5,5,5,5,5,5,5,5,5,5,5,5,5,4,5\}_{6}\\Z_{7}(7^{20})&=\{1,1,1,1,1,1,1,1,1,1,1,1,1,1,1,1,1,1,1,1\}_{7}\\Z_{8}(8^{20})&=\{2,5,2,5,2,5,2,5,2,5,2,5,2,5,2,5,2,5,2,5\}_{8}\\Z_{9}(9^{20})&=\{2,2,2,2,2,2,2,2,2,2,2,2,2,2,2,2,2,2,2,2\}_{9}\\Z_{10}(10^{20})&=\{2,4,9,9,9,9,9,9,9,9,9,9,9,9,9,9,9,9,9,6\}_{10}\\Z_{11}(11^{20})&=\{1,1,1,1,1,1,1,1,1,1,1,1,1,1,1,1,1,1,1,1\}_{11}\\Z_{12}(12^{20})&=\{5,11,11,11,11,11,11,11,11,11,11,11,11,11,11,11,11,11,10,10\}_{12}\\Z_{13}(13^{20})&=\{1,1,1,1,1,1,1,1,1,1,1,1,1,1,1,1,1,1,1,1\}_{13}\\Z_{14}(14^{20})&=\{2,4,9,4,9,4,9,4,9,4,9,4,9,4,9,4,9,4,9,1\}_{14}\\Z_{15}(15^{20})&=\{3,11,3,11,3,11,3,11,3,11,3,11,3,11,3,11,3,11,3,7\}_{15}\\Z_{16}(16^{20})&=\{3,15,15,15,15,15,15,15,15,15,15,15,15,15,15,15,15,15,15,15\}_{16}\\Z_{17}(17^{20})&=\{1,1,1,1,1,1,1,1,1,1,1,1,1,1,1,1,1,1,1,1\}_{17}\\Z_{18}(18^{20})&=\{4,8,17,17,17,17,17,17,17,17,17,17,17,17,17,17,17,17,17,14\}_{18}\\Z_{19}(19^{20})&=\{1,1,1,1,1,1,1,1,1,1,1,1,1,1,1,1,1,1,1,1\}_{19}\\Z_{20}(20^{20})&=\{4,19,19,19,19,19,19,19,19,19,19,19,19,19,19,19,19,19,19,11\}_{20}\\Z_{21}(21^{20})&=\{3,10,10,10,10,10,10,10,10,10,10,10,10,10,10,10,10,10,10,5\}_{21}\\Z_{22}(22^{20})&=\{2,4,8,17,13,4,8,17,13,4,8,17,13,4,8,17,13,4,8,14
\}_{22} \\Z_{23}(23^{20})&=\{1,1,1,1,1,1,1,1,1,1,1,1,1,1,1,1,1,1,1,1\}_{23}\\Z_{24}(24^{20})&=\{7,23,23,23,23,23,23,23,23,23,23,23,23,23,23,23,23,23,23,18\}_{24}\\Z_{25}(25^{20})&=\{3,3,3,3,3,3,3,3,3,3,3,3,3,3,3,3,3,3,3,3\}_{25}
\end{align*}

Of course, the origin of these cyclic repetitions is closely related
to the periodic expression of
$\vartheta(b)=\displaystyle{\lim_{n\rightarrow\infty}\frac{Z_b(n)}{n}}$
when expressed in base $b$. In this paper we study the relation
between $\vartheta(b)$ and the base $b$ expansion of $Z_b(b^k)$. If
$b$ is a prime power, we will show the equality between the $k$
digits of $Z_b(b^k)$ and the first $k$ digits in the fractional part
of $\vartheta(b)$ (i. e., $\lfloor\vartheta(b)b^k\rfloor$). In the
general case this equality does not hold. We are then interested in
investigating the number of final digits of $Z_b(b^k)$ which break
the previous coincidence. To do so, let us consider
$\eta(b,k):=\lfloor\log_b(\lfloor\vartheta(b)b^k\rfloor-Z_b(b^k))+1\rfloor$.
This represents the number of digits of the base $b$ expansion of
$\lfloor\vartheta(b)b^k\rfloor-Z_b(b^k)$ and observe that the number
of unequal final digits between $\lfloor\vartheta(b)b^k\rfloor$ and
$Z_b(b^k)$ is at most $\eta(b,k)+1$. We will show that
$$\eta(b,k)\leq\lfloor\log_b k+2\rfloor$$
is the best possible upper bound for $\eta(b,k)$. Nevertheless, we
will improve this upper bound in the case when $b$ is square-free
and present some conjectures about this bound.

The paper is organized as follows, in the second section we present
the basic facts and technical results that will be used in the rest
of the paper. In the third section we study the prime-power case,
establishing the equality between the $k$ digits of $Z_b(b^k)$ and
the first $k$ digits of the base $b$ expansion of $\vartheta(b)$.
Finally, in the fourth section, we study the general case, where the
latter equality does not hold and give a bound to the number of
unequal digits.

\section{Technical results}
In this section we present some technical results which will be very
useful in the sequel. Some of them are well-known and are presented
without proof. The section is divided into three parts, the first is
devoted to results related to $Z_b(b^k)$, the second mostly deals
with the base $b$ expansion of $\vartheta(b)$ when $b$ is a
prime-power and the third one is devoted to present the main lemma
which will be crucial in the paper.

\subsection{Some results about $Z_b(n)$}
We start this subsection with the following well-known lemma, which
was first proved by Legendre (see \cite{LEG}), that we present
without proof.

\begin{lem}
\
\begin{enumerate}
\item $\displaystyle{Z_p(n)=\sum_{i\geq1}\left\lfloor\frac{n}{p^i}\right\rfloor=\frac{n-\sigma_p(n)}{p-1}}$, where $\sigma_p(n)$ is the sum of the digits of the base $p$ expansion of $n$.
\item $Z_{p^r}(n)=\left\lfloor\displaystyle{\frac{Z_p(n)}{r}}\right\rfloor$ for every $r\geq1$.
\item If $b=p_1^{r_1}\cdots p_s^{r_s}$, then $Z_b(n)=\displaystyle{\min_{1\leq i\leq s}} Z_{p_i^{r_i}}(n)$.
\end{enumerate}
\end{lem}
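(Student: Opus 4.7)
The plan is to reduce each of the three statements to a computation of the $b$-adic valuation $\nu_b(n!)$, since a trailing zero in base $b$ corresponds to exactly one factor of $b$, so $Z_b(n)=\nu_b(n!)$, the largest $k$ with $b^k\mid n!$.

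For item (1), I would first establish $\nu_p(n!)=\sum_{i\geq 1}\lfloor n/p^i\rfloor$ by the usual double-counting: each $m\leq n$ contributes $\nu_p(m)=\sum_{i\geq 1}[p^i\mid m]$ to $\nu_p(n!)$, and summing the indicators over $m\in\{1,\ldots,n\}$ first yields $\sum_{i\geq 1}\lfloor n/p^i\rfloor$, since exactly $\lfloor n/p^i\rfloor$ of those integers are divisible by $p^i$. For the closed-form expression $(n-\sigma_p(n))/(p-1)$, I would write $n=\sum_{i\geq 0}a_ip^i$ in base $p$, use $\lfloor n/p^j\rfloor=\sum_{i\geq j}a_ip^{i-j}$, and interchange the two sums:
\[
\sum_{j\geq 1}\left\lfloor\frac{n}{p^j}\right\rfloor=\sum_{i\geq 1}a_i\sum_{j=1}^{i}p^{i-j}=\sum_{i\geq 1}a_i\cdot\frac{p^i-1}{p-1}=\frac{n-\sigma_p(n)}{p-1},
\]
the last equality coming from $\sum_{i\geq 0}a_ip^i=n$ and $\sum_{i\geq 0}a_i=\sigma_p(n)$.

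For item (2), I would observe that $(p^r)^k\mid n!$ is equivalent to $p^{rk}\mid n!$, i.e.\ $rk\leq\nu_p(n!)=Z_p(n)$; the largest such $k$ is then $\lfloor Z_p(n)/r\rfloor$. For item (3), since the $p_i$ are distinct primes, the powers $p_1^{r_1k},\ldots,p_s^{r_sk}$ are pairwise coprime, so $b^k=\prod_ip_i^{r_ik}$ divides $n!$ if and only if each $p_i^{r_ik}$ does, which by item (2) is equivalent to $k\leq Z_{p_i^{r_i}}(n)$ for every $i$; the maximum such $k$ is therefore $\min_{1\leq i\leq s}Z_{p_i^{r_i}}(n)$. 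There is no substantive obstacle here: everything reduces to bookkeeping with floors and the independence of $p$-adic valuations for distinct primes; the only step requiring any care is the interchange of sums in (1), which is harmless because only finitely many summands are nonzero.
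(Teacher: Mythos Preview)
Your argument is correct and is the standard one: Legendre's formula via double counting, the digit-sum identity via interchanging finite sums, and parts (2) and (3) by reading off the largest admissible exponent from the $p$-adic valuation. The paper does not actually prove this lemma; it states it as well known, attributes part (1) to Legendre, and presents it without proof, so there is nothing to compare beyond noting that your writeup supplies exactly the classical justification the paper omits.
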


As a straightforward consequence of the preceding lemma we obtain:

\begin{cor}
Let $l$ be any integer and $p$ be a prime. Then:
$$Z_p(lp^n)=lZ_p(p^n)+Z_p(l).$$
\end{cor}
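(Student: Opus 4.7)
The plan is to derive everything directly from part (1) of the preceding lemma, which gives the closed form $Z_p(n)=(n-\sigma_p(n))/(p-1)$. The only combinatorial ingredient needed is the observation that multiplying an integer by $p^n$ simply appends $n$ zeroes to its base-$p$ expansion, hence leaves the digit sum invariant:
$$\sigma_p(lp^n)=\sigma_p(l).$$
Once this is noted, the rest is a one-line algebraic manipulation.

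Concretely, I would first apply Legendre's formula to each of the three quantities appearing in the identity. This yields
$$Z_p(lp^n)=\frac{lp^n-\sigma_p(l)}{p-1},\qquad Z_p(p^n)=\frac{p^n-1}{p-1},\qquad Z_p(l)=\frac{l-\sigma_p(l)}{p-1},$$
where in the first equation I have already used $\sigma_p(lp^n)=\sigma_p(l)$, and in the second I have used $\sigma_p(p^n)=1$. Then I would compute
$$lZ_p(p^n)+Z_p(l)=\frac{l(p^n-1)+l-\sigma_p(l)}{p-1}=\frac{lp^n-\sigma_p(l)}{p-1},$$
which matches the expression for $Z_p(lp^n)$.

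There is essentially no obstacle here; the only point that deserves a sentence of justification is the digit-sum invariance $\sigma_p(lp^n)=\sigma_p(l)$, and even that is immediate from the uniqueness of the base-$p$ representation. The proof will therefore be a short half-page computation, presented without any case analysis on $l$ (the identity holds for every positive integer $l$, and one may equally well verify it when $l$ has any number of base-$p$ digits).
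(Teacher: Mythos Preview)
Your argument is correct and is exactly the kind of direct computation the paper has in mind when it calls the corollary ``a straightforward consequence of the preceding lemma''; you use the digit-sum form of Legendre's formula from Lemma~1(1) together with the obvious fact that $\sigma_p(lp^n)=\sigma_p(l)$. Nothing more is needed.
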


Recall that $\vartheta(b)=\displaystyle{\lim_{n\rightarrow\infty}\frac{Z_b(n)}{n}}$. In \cite{HAR} and \cite{TRE} an explicit expression for $\vartheta(b)$ was given. Namely:

\begin{prop}
If $b=p_1^{r_1}\cdots p_s^{r_s}$, then:
$$\vartheta(b)=\min_{1\leq i\leq s}\frac{1}{r_i(p_i-1)}.$$
\end{prop}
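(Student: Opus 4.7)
The plan is to deduce the proposition almost immediately from Lemma~1, by combining parts (1), (2) and (3) and letting $n\to\infty$.

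First I would use Lemma~1(3) to write
$$\frac{Z_b(n)}{n}=\min_{1\leq i\leq s}\frac{Z_{p_i^{r_i}}(n)}{n}.$$
Since we are taking a minimum over a fixed finite set of sequences, continuity of the function $(x_1,\dots,x_s)\mapsto\min_i x_i$ ensures that the limit can be pushed inside the minimum, provided each sequence $Z_{p_i^{r_i}}(n)/n$ converges. So it suffices to compute $\lim_{n\to\infty}Z_{p^r}(n)/n$ for a single prime power $p^r$.

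Next I would combine Lemma~1(1) and Lemma~1(2) to get
$$\frac{Z_{p^r}(n)}{n}=\frac{1}{n}\left\lfloor\frac{1}{r}\cdot\frac{n-\sigma_p(n)}{p-1}\right\rfloor.$$
Dropping the floor introduces an error bounded by $1/n$, which vanishes in the limit. The remaining expression equals
$$\frac{1}{r(p-1)}-\frac{\sigma_p(n)}{r(p-1)n},$$
and since $\sigma_p(n)\leq (p-1)(\lfloor\log_p n\rfloor+1)=O(\log n)$, the second term tends to $0$. Hence $\lim_{n\to\infty}Z_{p^r}(n)/n=\tfrac{1}{r(p-1)}$, and substituting back gives the claimed formula.

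There is essentially no obstacle here beyond bookkeeping; the only subtle points are the (routine) justification that the minimum commutes with the limit for a finite family of convergent sequences, and the estimate $\sigma_p(n)=O(\log n)$ which kills the digit-sum correction in Lemma~1(1). Both are elementary, so the proof will be short.
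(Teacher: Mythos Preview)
Your argument is correct. The paper does not actually prove this proposition; it merely quotes the formula from \cite{HAR} and \cite{TRE}, so there is no ``paper's own proof'' to compare against. Your derivation from Lemma~1 is the natural self-contained route: part~(3) reduces to prime powers, parts~(1)--(2) give the closed form up to a floor and a digit-sum term, and both corrections are $O(1/n)$ or $O((\log n)/n)$. The only point worth stating a touch more explicitly is the interchange of limit and minimum: you invoke continuity of $\min$ on $\mathbb{R}^s$, which is fine, but it is worth noting that this uses the fact that each individual sequence $Z_{p_i^{r_i}}(n)/n$ converges (which you do establish), since in general $\lim\min$ and $\min\lim$ need not agree when some component sequences fail to converge. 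With that caveat made explicit, the proof is complete and short, exactly as you anticipate.
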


\begin{rem}
The sequence $S_n=\dfrac{1}{\vartheta(n)}$ appears in \emph{The
On-Line Encyclopedia of Integer Sequences} as sequence A090624. It
is interesting to observe that it was included in the Encyclopedia 5
years before the formula for $\vartheta(n)$ was found.
\end{rem}

The following lemma will be of great importance in sections 3 and 4.

\begin{lem}
Let $k\geq 0$ be an integer.
\begin{enumerate}
\item If $b>1$ is an integer, then: $$0<Z_b(b^{k+1})-bZ_b(b^k).$$
\item If $b$ is a prime power, then: $$0<Z_b(b^{k+1})-bZ_b(b^k)<b.$$
\end{enumerate}
\end{lem}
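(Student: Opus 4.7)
The plan is to reduce both parts to a single $Z_p$-computation for each prime $p$ dividing $b$, transfer the result to $Z_{p^r}$ via the floor identity $Z_{p^r}(n)=\lfloor Z_p(n)/r\rfloor$, and invoke $Z_b=\min_i Z_{p_i^{r_i}}$ to deduce part (1).

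First, Legendre's formula gives by direct calculation
\[
Z_p(b^{k+1}) - bZ_p(b^k) \;=\; \frac{b\sigma_p(b^k)-\sigma_p(b^{k+1})}{p-1}.
\]
I would fix a prime $p\mid b$ and write $b=p^r t$ with $\gcd(t,p)=1$; since multiplying by a power of $p$ only appends zeros in base $p$, one has $\sigma_p(b^k)=\sigma_p(t^k)$ and similarly for $k+1$. Combining the well-known submultiplicativity $\sigma_p(xy)\leq\sigma_p(x)\sigma_p(y)$ with the trivial $\sigma_p(t)\leq t$ then yields
\[
b\sigma_p(t^k)-\sigma_p(t^{k+1})\;\geq\;\sigma_p(t^k)\bigl(b-\sigma_p(t)\bigr)\;\geq\;\sigma_p(t^k)\sigma_p(t)(p^r-1),
\]
so after dividing by $p-1$ one obtains $\Delta_p:=Z_p(b^{k+1})-bZ_p(b^k) \geq \sigma_p(t^k)\sigma_p(t)(1+p+\cdots+p^{r-1})\geq r$, since each factor is at least $1$ and the geometric sum has $r$ terms.

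Next, to pass from $Z_p$ to $Z_{p^r}$, I would use the floor identity. Writing $Z_p(b^k)=rq+\rho$ with $0\leq\rho<r$, the previous computation rewrites $Z_p(b^{k+1})=brq+b\rho+\Delta_p$, so that
\[
Z_{p^r}(b^{k+1})-bZ_{p^r}(b^k)\;=\;\left\lfloor\frac{b\rho+\Delta_p}{r}\right\rfloor\;\geq\;\left\lfloor\frac{\Delta_p}{r}\right\rfloor\;\geq\;1.
\]
Since this holds for every prime-power divisor $p_i^{r_i}$ appearing in the factorization of $b$, part (1) follows upon choosing an index $j$ that attains the minimum in $Z_b(b^{k+1})=\min_i Z_{p_i^{r_i}}(b^{k+1})$: then $Z_b(b^{k+1}) = Z_{p_j^{r_j}}(b^{k+1}) > bZ_{p_j^{r_j}}(b^k) \geq bZ_b(b^k)$.

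For part (2), specializing to $b=p^r$ forces $t=1$, $\sigma_p(t^k)=1$, and $\Delta_p=(p^r-1)/(p-1)$. The same floor formula becomes
\[
Z_{p^r}(p^{r(k+1)})-p^rZ_{p^r}(p^{rk})\;=\;\left\lfloor\frac{p^r\rho+(p^r-1)/(p-1)}{r}\right\rfloor,
\]
and combining $\rho\leq r-1$ with the evident inequality $(p^r-1)/(p-1)<p^r$ makes the numerator strictly less than $rp^r$, yielding the desired upper bound $<b$. The main obstacle is making sure the transfer step does not wipe out positivity through the floor; the bound $\Delta_p\geq r$ is precisely what guarantees this, and the hypothesis $p\mid b$ (so that $r\geq 1$ and $p^r-1\geq 1$) enters in an essential way.
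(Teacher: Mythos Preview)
Your argument is correct. For part~(2) it amounts to the same computation as the paper's: both reduce to bounding $\bigl(p^r\rho+(p^r-1)/(p-1)\bigr)/r$ with $0\le\rho\le r-1$, after using $Z_p(p^{r(k+1)})=p^rZ_p(p^{rk})+Z_p(p^r)$.

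For part~(1) the routes differ. The paper argues termwise in Legendre's sum: from $b\lfloor b^k/p^i\rfloor\le\lfloor b^{k+1}/p^i\rfloor$ for every $i$, with strict inequality at $i=r(k+1)$ (where $b^k/p^i\notin\mathbb Z$ but $b^{k+1}/p^i\in\mathbb Z$), one obtains $bZ_{p^r}(b^k)<Z_{p^r}(b^{k+1})$ directly. You instead pass through the digit-sum form of Legendre's formula and the submultiplicativity $\sigma_p(xy)\le\sigma_p(x)\sigma_p(y)$ to extract the quantitative bound $\Delta_p\ge r$, which is exactly enough to survive the floor division by $r$. Your approach is slightly longer but has the advantage of yielding an explicit lower bound and of treating parts~(1) and~(2) in a single framework; the paper's approach is shorter for~(1) alone but does not immediately give the numerical control needed for~(2). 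Both arguments conclude by the same min-argument: pick the index $j$ realizing $Z_b(b^{k+1})$ and chain $Z_b(b^{k+1})=Z_{p_j^{r_j}}(b^{k+1})>bZ_{p_j^{r_j}}(b^k)\ge bZ_b(b^k)$.
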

\begin{proof}
\begin{enumerate}
\item Fist of all observe that $k\lfloor x\rfloor\leq \lfloor kx\rfloor$ for all $k\in\mathbb{Z}$, $x\in\mathbb{R}$ and that if $x\notin\mathbb{Z}$ and $kx\in\mathbb{Z}$ the inequality is strict. Now, for some prime divisor of $b$ (with exponent $r$ in the decomposition of $b$) we have that: $$bZ_b(b^{k})=b\left\lfloor\frac{1}{r}\displaystyle{\sum_{i\geq 1}\left\lfloor\frac{b^{k}}{p^i}\right\rfloor}\right\rfloor<\left\lfloor\frac{1}{r}\sum_{i\geq 1}\left\lfloor\frac{b^{k+1}}{p^i}\right\rfloor\right\rfloor=Z_b(b^{k+1}),$$
since $\displaystyle{\frac{b^{k}}{p^{r(k+1)}}}$ is not an integer,
while $\displaystyle{b\frac{b^{k}}{p^{r(k+1)}}}$ is.
\item Put $b=p^n$. Then, Corollary 1 implies that $Z_p(p^{(k+1)n})=p^nZ_p(p^{kn})+Z_p(p^n)$.

Now, if $r$ is the reminder of the division between $Z_p(p^{(k+1)n})$ and $n$ and $s$ is the reminder of the division between $Z_p(p^{kn})$ and $n$ it follows that:
$$Z_{p^n}(p^{(k+1)n})=\frac{Z_p(p^{(k+1)n})-r}{n},$$
$$Z_{p^n}(p^{kn})=\frac{Z_p(p^{kn})-s}{n}.$$

Thus, since $0\leq r,s\leq n-1$:
\begin{align*}
Z_{p^n}(p^{(k+1)n})-p^nZ_{p^n}(p^{kn})&=\frac{Z_p(p^n)+p^ns-r}{n}\leq\frac{Z_p(p^n)+p^ns}{n}=\\&=\frac{\frac{p^n-1}{p-1}+p^ns}{n}<\frac{p^n+p^ns}{n}\leq\\&\leq\frac{p^n+p^n(n-1)}{n}=p^n.
\end{align*}
\end{enumerate}
\end{proof}

\subsection{The base $p^n$ expansion of $\vartheta(p^n)$}
Let us start by introducing some notation. With
$q=\{d.d_{1}d_{2}...d_{t}\overbrace{d_{t+1}...d_{t+s}}\}_{b}$ we
mean that the fractional part of $q$ in base $b$ consists of $t$
digits followed by a periodic sequence of $s$ digits
($d_{t+i}=d_{t+i+s}$ for all $i>0$). Clearly $t$ can be arbitrarily
large and the length of the period can be any multiple of $s$, so we
will usually assume that $t$ and $s$ are minimal. We will say that
$q$ is exact in base $b$ if there exists $k\geq 1$ such that $d_i=0$
for every $i\geq k$ or $d_i=b-1$ for every $i\geq k$; i.e.:
$$q=\{d.d_{1}d_{2}...d_{t}\overbrace{b-1}\}_{b}=\{d.d_{1}d_{2}...d_{t}+1%
\overbrace{0}\}_{b}$$

\begin{lem}
Let $p$ be a prime and $1\leq r\in\mathbb{Z}$. Then $\dfrac{1}{r(p-1)}$ is exact in base $p^r$ if and only if $p=2$ and $r$ is a power of 2.
\end{lem}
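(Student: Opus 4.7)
The plan is to translate the definition of ``exact in base $p^r$'' into a divisibility condition on the denominator, and then do a short case analysis on whether $p=2$ or $p$ is odd.

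First I would observe that a reduced fraction $a/m$ terminates (equivalently, is exact) in base $B$ if and only if every prime factor of $m$ divides $B$. Since the fraction $\frac{1}{r(p-1)}$ already has numerator $1$, it is in lowest terms, so it is exact in base $p^r$ if and only if every prime factor of $r(p-1)$ equals $p$, i.e., $r(p-1) = p^j$ for some $j \ge 0$. (I would justify the ``if'' direction by noting that if $r(p-1) = p^j$ then multiplying $\frac{1}{r(p-1)}$ by $(p^r)^N$ with $rN \ge j$ yields an integer, so the base-$p^r$ expansion terminates; the ``only if'' direction uses uniqueness of prime factorization in the same way.)

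Next, I would split into cases. If $p$ is odd, then $p-1 \ge 2$, and since $\gcd(p-1,p)=1$, the integer $p-1$ admits some prime factor $q \neq p$. Then $q \mid r(p-1)$ but $q \nmid p^j$, so $r(p-1)$ cannot be a power of $p$; hence $\frac{1}{r(p-1)}$ is not exact in base $p^r$. If $p=2$, then $p-1=1$, so the condition $r(p-1) = p^j$ reduces simply to $r = 2^j$, i.e., $r$ is a power of $2$.

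Combining the two cases gives exactly the stated equivalence. I do not expect any serious obstacle: the only point worth being careful about is cleanly stating the terminating-expansion criterion in base $p^r$ (rather than in the prime base $p$), but since the prime factors of $p^r$ are just $\{p\}$, the criterion is identical to the one for base $p$, so this is harmless.
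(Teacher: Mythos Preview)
Your argument is correct and follows essentially the same route as the paper: the paper's proof is the one-line observation that $1/n$ is exact in base $p^r$ if and only if $\mathrm{rad}(n)=p$, i.e., $n$ is a power of $p$, and you have simply unpacked this criterion and carried out the case split on $p=2$ versus $p$ odd explicitly.
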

\begin{proof}
This is a straightforward consequence from the fact that $\dfrac{1}{n}$ is exact in base $p^r$ if and only if $\textrm{rad}(n)=p$; i. e., if and only if $n$ is a power of $p$.
\end{proof}

Now we will present some results about the base $b$ expansion of $\vartheta(b)$ when $b=p^n$ is a prime-power.

\begin{lem}
Let $p$ be a prime and $b=p^n$ with $n\in\mathbb{N}$. Then:
$$\vartheta (p^{n})=\{0.d_{1}d_{2}...d_{t}\overbrace{d_{t+1}...d_{t+s}}\}_{b}\Longleftrightarrow \frac{p^{nt}}{n}\sum_{i=0}^{sn-1}p^{i}\in
\mathbb{Z}.$$
\end{lem}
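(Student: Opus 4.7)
The plan is to reduce the statement to a direct application of the standard characterization of eventually periodic base-$b$ expansions, combined with Proposition 1.

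First I would invoke Proposition 1 with $b = p^n$ to rewrite $\vartheta(p^n) = \tfrac{1}{n(p-1)}$. Next I would recall (or briefly verify) the well-known equivalence: for a real number $q \in [0,1)$, the representation
$$q = \{0.d_1 d_2\ldots d_t \overbrace{d_{t+1}\ldots d_{t+s}}\}_b$$
holds for some choice of digits $d_i \in \{0,1,\ldots,b-1\}$ if and only if $b^t(b^s - 1)\, q \in \mathbb{Z}$. The forward implication follows by observing that $b^t q = N + \tfrac{M}{b^s - 1}$, where $N$ and $M$ are the integers whose base-$b$ expansions are $d_1\cdots d_t$ and $d_{t+1}\cdots d_{t+s}$ respectively, so multiplication by $b^s - 1$ clears the denominator. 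The converse uses the fact that any number of the form $K/(b^s - 1)$ with $K \in \mathbb{Z}$ admits a purely periodic base-$b$ expansion of period length (dividing) $s$, and dividing by $b^t$ shifts this behind a pre-period of length (at most) $t$. The statement of the lemma does not require minimality of $t$ and $s$, which sidesteps any subtlety here.

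Having the equivalence in hand, I would then plug in $q = \tfrac{1}{n(p-1)}$ and $b = p^n$ and simply compute
$$b^{t}(b^{s}-1)\,q \;=\; \frac{p^{nt}(p^{ns}-1)}{n(p-1)} \;=\; \frac{p^{nt}}{n}\cdot\frac{p^{ns}-1}{p-1} \;=\; \frac{p^{nt}}{n}\sum_{i=0}^{sn-1} p^{i},$$
the last step being the geometric series identity. This integer condition is precisely the one appearing in the statement, so the equivalence is established.

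The only real obstacle is the verification of the underlying equivalence for periodic base-$b$ expansions, which is classical but deserves at least a sentence of justification to cover both directions; after that, the rest is a one-line algebraic manipulation.
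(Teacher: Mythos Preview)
Your proposal is correct and follows essentially the same route as the paper. The paper also begins by rewriting the integrality condition as $b^{t}(b^{s}-1)\vartheta(p^{n})$ via the geometric series identity, and then proves both directions of the equivalence between eventual periodicity and this integrality condition; the only cosmetic difference is that the paper writes out the converse by comparing the fractional parts of $b^{t}\vartheta(p^{n})$ and $b^{t+s}\vartheta(p^{n})$ directly, whereas you phrase it as the standard fact that $K/(b^{s}-1)$ is purely periodic.
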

\begin{proof}
First of all observe that:
$$\frac{p^{nt}}{n}\sum_{i=0}^{sn-1}p^{i}=\frac{p^{nt}(p^{sn}-1)}{n(p-1)}=b^{t}\vartheta (p^{n})(b^{s}-1).$$

Let us suppose that $\vartheta
(p^{n})=\{0.d_{1}d_{2}...d_{t}\overbrace{d_{t+1}...d_{t+s}} \}_{b}$.
Then we have that:
\begin{align*}
b^{t}\vartheta(p^{n})=&\{d_{1}d_{2}...d_{t}.\overbrace{d_{t+1}...d_{t+s}}\}_{b},\\b^{t+s}\vartheta (p^{n})=&\{d_{1}d_{2}...d_{t}d_{t+1}...d_{t+s}.\overbrace{
d_{t+1}...d_{t+s}}\}_{b}.
\end{align*}
and it is enough to substract both expressions to obtain that
$b^{t}\vartheta (p^{n})(b^{s}-1)\in \mathbb{Z}$.

Conversely, assume that $b^{t}\vartheta
(p^{n})(b^{s}-1)\in\mathbb{Z}$. It is easy to see that there exists
a sequence $\{d_{i}\}_{i\in \mathbb{N}}\subset \mathbb{N}$, not
eventually null, such that:
$$\vartheta (p^{n})=\sum_{i=1}^{\infty }d_{i}p^{-ni}\ \textrm{with}\ 0\leq
d_{i}<b\text{, }\forall i\in \mathbb{N}.$$

Consequently:
\begin{align*}
b^{t}\vartheta (p^{n})=&z_{1}+\sum_{i=1}^{\infty }d_{t+i}p^{-ni}\ \textrm{with}\ z_{1}\in \mathbb{Z},\\b^{(t+s)}\vartheta (p^{n})=&z_{2}+\sum_{i=1}^{\infty }d_{t+s+i}p^{-ni}\ \textrm{with}\ z_{2}\in \mathbb{Z},
\end{align*}
where
$$0<\sum_{i=1}^{\infty }d_{t+i}p^{-ni}\leq 1,$$
$$0<\sum_{i=1}^{\infty }d_{t+s+i}p^{-ni}\leq 1.$$

Since $b^{(t+s)}\vartheta (p^{n})-b^{t}\vartheta (p^{n})=b^{t}\vartheta
(p^{n})(b^{s}-1)\in \mathbb{Z}$ we have that:
$$\sum_{i=1}^{\infty }d_{t+s+i}p^{-ni}=\sum_{i=1}^{\infty
}d_{t+i}p^{-ni}.$$

From this fact it readily follows that $d_{t+i}=d_{t+s+i}$ and the proof is complete.
\end{proof}

\begin{prop}
Let $p$ be a prime, $b=p^{n}$ and $(n,s,k)\in \mathbb{N}^{3}$. Then:
$$\frac{b^{k}}{n}\sum_{i=0}^{sn-1}p^{i}\in\mathbb{Z}\Longleftrightarrow\frac{b}{n}\sum_{i=0}^{sn-1}p^{i}\in \mathbb{Z}.$$
Or, in other words:
$$\frac{p^{nk}(p^{sn}-1)}{n(p-1)}\in\mathbb{Z}\Longleftrightarrow\frac{p^{n}(p^{sn}-1)}{n(p-1)}\in \mathbb{Z}.$$
\end{prop}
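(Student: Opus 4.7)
The plan is to reduce both divisibility conditions to a single condition on $p^{sn}-1$ by separating the $p$-part from the $p$-coprime part of the denominator $n(p-1)$. The key observation is that $p^{sn}-1$ is coprime to $p$, so any factor of $p$ in the denominator must be cancelled by the $p^{nk}$ (respectively $p^n$) factor in the numerator.

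First I would write $n=p^{a}m$ with $\gcd(m,p)=1$, where $a=v_{p}(n)$. Since $p\nmid p-1$, this gives $n(p-1)=p^{a}\cdot m(p-1)$ with $\gcd(p,m(p-1))=1$. Because $\gcd(p^{nk},\,p^{sn}-1)=\gcd(p^{n},\,p^{sn}-1)=1$, the two divisibility statements split as
\begin{align*}
\frac{p^{nk}(p^{sn}-1)}{n(p-1)}\in\mathbb{Z}&\iff p^{a}\mid p^{nk}\ \text{and}\ m(p-1)\mid p^{sn}-1,\\
\frac{p^{n}(p^{sn}-1)}{n(p-1)}\in\mathbb{Z}&\iff p^{a}\mid p^{n}\ \text{and}\ m(p-1)\mid p^{sn}-1.
\end{align*}

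Next I would observe that the $p$-part is automatic in both cases: since $a=v_{p}(n)\leq\log_{p}(n)\leq n$, we have $p^{a}\mid p^{n}$ and, a fortiori, $p^{a}\mid p^{nk}$ (for $k\geq 1$). Hence both conditions are equivalent to the single condition $m(p-1)\mid p^{sn}-1$, and the claimed equivalence follows immediately.

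There is no real obstacle here; the argument is purely a $p$-adic bookkeeping once one recognises that the factor $p^{sn}-1$ carries no $p$-content and that $v_{p}(n(p-1))=v_{p}(n)$. The only small point to check is that $n\geq v_{p}(n)$ (and hence $nk\geq v_{p}(n)$) so that the $p$-part of the denominator is genuinely killed by the explicit power of $p$ in the numerator on both sides.
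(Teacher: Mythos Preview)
Your proof is correct and follows essentially the same route as the paper's own argument: both factor $n=p^{a}m$ with $\gcd(p,m)=1$, observe that the coprime part of the denominator must divide the $p$-free factor $\sum_{i=0}^{sn-1}p^{i}$ (equivalently $p^{sn}-1$) independently of the power of $p$ in front, and then check that $a\leq n$ so the $p$-part of the denominator is cancelled by $p^{n}$ (and a fortiori by $p^{nk}$). The only cosmetic difference is that you fold $p-1$ into the coprime denominator and work with $p^{sn}-1$, whereas the paper keeps the geometric sum; this changes nothing substantive.
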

\begin{proof}
We can write $n=p^rn'$ with $r\geq 0$ and $\gcd(p,n')=1$. Then $b^k=p^{kn}=p^{p^rn'k}$ and observe that $r<p^r<p^rn'<p^rn'k$.

Assume that $\displaystyle{\frac{b^{k}}{n}\sum_{i=0}^{sn-1}p^{i}\in\mathbb{Z}}$, then $\displaystyle{\frac{p^{p^rn'k-r}}{n'}\sum_{i=0}^{sn-1}p^{i}\in\mathbb{Z}}$ and, since $\gcd(p,n')=1$ it follows that $\displaystyle{\frac{1}{n'}\sum_{i=0}^{sn-1}p^{i}\in\mathbb{Z}}$ and $\displaystyle{\frac{p^{p^rn'-r}}{n'}\sum_{i=0}^{sn-1}p^{i}\in\mathbb{Z}}$. But $\displaystyle{\frac{p^{p^rn'-r}}{n'}=\frac{b}{n}}$ and we are done.

The converse is obvious.
\end{proof}

\begin{cor}
The base $p^n$ expansion of $\vartheta(p^n)$  is pure periodic or mixed periodic with only one non-periodic figure; i. e., either
$\vartheta (p^{n})=\{0.d_{1}\overbrace{d_{2}...d_{s+1}}\}_{p^{n}}$ with $d_{s+1}\neq d_{1}$
 or $\vartheta (p^{n})=\{0.\overbrace{d_{1}...d_{s}}\}_{p^{n}}$
\end{cor}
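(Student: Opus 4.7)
The plan is to use Lemma~3 and Proposition~2 to force the minimal pre-period length of the base-$b$ expansion of $\vartheta(p^{n})$ to be at most $1$. Since $\vartheta(p^{n})=1/(n(p-1))$ is rational, its base-$b$ expansion is eventually periodic, so we can write
$$\vartheta(p^{n})=\{0.d_{1}d_{2}\ldots d_{t}\overbrace{d_{t+1}\ldots d_{t+s}}\}_{b}$$
with $t\geq 0$ and $s\geq 1$ chosen minimal. The corollary then reduces to showing that $t\leq 1$, with the additional check that $d_{1}\neq d_{s+1}$ whenever $t=1$.

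If $t=0$ the expansion is already pure periodic, which is one of the two allowed forms and nothing more is to be done. Suppose instead $t\geq 1$. Applying Lemma~3 to the chosen expansion yields
$$\frac{p^{nt}}{n}\sum_{i=0}^{sn-1}p^{i}=\frac{b^{t}}{n}\sum_{i=0}^{sn-1}p^{i}\in\mathbb{Z}.$$
Proposition~2, applied with $k=t$, lets us replace $b^{t}$ by $b$ and obtain $\frac{b}{n}\sum_{i=0}^{sn-1}p^{i}\in\mathbb{Z}$. Reading Lemma~3 in the reverse direction, now with pre-period length $1$ and the same period length $s$, produces a valid expansion of $\vartheta(p^{n})$ of the form $\{0.d_{1}'\overbrace{d_{2}'\ldots d_{s+1}'}\}_{b}$. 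The minimality of the pre-period $t$ then forces $t\leq 1$, hence $t=1$.

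For the extra condition in the mixed case, suppose $t=1$ and $d_{s+1}=d_{1}$. Then the digit string after the radix point reads $d_{1},d_{2},\ldots,d_{s},d_{1},d_{2},\ldots,d_{s},\ldots$, which is already pure periodic with pre-period $0$ and period $s$, contradicting the minimality of $t$. Thus $d_{1}\neq d_{s+1}$, as required.

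I do not foresee a serious obstacle: the two previous results do essentially all of the work, and the only point that needs a moment of care is noticing that the equivalence of Proposition~2 is really what allows collapsing any witness $t\geq 1$ down to the single value $t=1$, after which the minimality argument and a short check dispose of the corollary.
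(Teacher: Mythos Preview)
Your argument is correct and follows exactly the paper's own proof: apply the biconditional lemma to pass from an arbitrary pre-period $t$ to the integrality condition, use Proposition~2 to drop the exponent to $1$, read the lemma backwards, and then invoke minimality (with the final check that $d_{s+1}=d_{1}$ would force pure periodicity). The one slip is a numbering error: the biconditional you call ``Lemma~3'' is Lemma~4 in the paper (Lemma~3 is the statement about when $\vartheta(p^{n})$ is exact).
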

\begin{proof}
If $\vartheta (p^{n}) =\{0.d_{1}d_{2}...d_{t}\overbrace{d_{t+1}...d_{t+s}}
\}_{p^{n}}$, then we have that  $\displaystyle{\frac{p^{nt}}{n}\sum_{i=0}^{sn-1}p^{i}\in
\mathbb{Z}}$. By the previous proposition this implies that
$\displaystyle{\frac{p^{n}}{n}\sum_{i=0}^{sn-1}p^{i} \in \mathbb{Z}}$
 and, consequently, $\vartheta (p^{n})=\{0.d_{1}\overbrace{d_{2}...d_{s+1}}
\}_{p^{n}}$. Finally, if $d_{s+1}=d_{1}$, then $\vartheta (p^{n})$
is pure periodic and this completes the proof.
\end{proof}

\subsection{The main lemma}
The following lemma will be crucial in the next section.

\begin{lem}
Let $\left\{ S_{n}\right\} _{n\in \mathbb{N}}$ be a sequence of integers and define $\Delta_{1}:=S_{1}$ and $\Delta_{n}:=S_{n+1}-bS_{n}$. If $0<\Delta _{n}<b$, for all $n\in \mathbb{N}$, then the following hold:
\begin{enumerate}
\item $\displaystyle{S_{n}=\sum_{i=1}^{n}b^{n-i}\Delta _{i}}$.
\item $\left\lfloor \log _{b}S_{n}\right\rfloor +1=n$.
\item $\displaystyle{\ell :=\lim_{n\rightarrow \infty}\dfrac{S_{n}}{b^{n}}=\sum_{i=1}^{\infty
}b^{-i}\Delta _{i}}$.
\item If $\Delta _{i}$ $=(b-1)$ for all $i>1$, then $S_{k}=\ell b^{k}-1$
for every $k>1$.
\item If $\ell$ is not exact in base $b$, then $S_{k}=\left\lfloor \ell b^{k}\right\rfloor $ for every $k$.
\item If $\ell =\sum_{i=1}^{\infty }b^{-i}\Delta _{n}\in \mathbb{Q}$, then there exists $(t,s)\in \mathbb{N}^{2}$ with $s>0$ such that $\Delta
_{n+s}=\Delta _{n}$ (and $S_{n+s}=S_{n})$ for all $n>t$.
\end{enumerate}
\end{lem}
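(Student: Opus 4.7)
The plan is to treat the six parts in order, each building on the previous. For \textbf{(1)} I would induct on $n$: the base case $S_1 = \Delta_1$ is immediate, and the inductive step uses the recurrence $S_{n+1} = bS_n + \Delta_{n+1}$ to push the index up. Part \textbf{(2)} is then a direct consequence: the formula in (1) writes $S_n$ as a base-$b$ numeral with ``digits'' $\Delta_1,\ldots,\Delta_n$, and since $1 \le \Delta_i \le b-1$ the leading ``digit'' is nonzero while the full sum is bounded above by $\sum_{i=1}^{n}(b-1)b^{n-i} = b^n-1$, giving $b^{n-1} \le S_n < b^n$. For \textbf{(3)} I divide the formula in (1) by $b^n$ to get a partial sum of a series that converges absolutely (dominated by the geometric $\sum (b-1)b^{-i} = 1$), so the limit equals $\sum_{i=1}^{\infty} b^{-i}\Delta_i$.

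Part \textbf{(4)} is a direct computation from the hypothesis $\Delta_i = b-1$ for all $i>1$: on the one hand $\ell = b^{-1}\Delta_1 + (b-1)\sum_{i=2}^{\infty} b^{-i} = b^{-1}(\Delta_1+1)$, and on the other hand the formula in (1) gives $S_k = b^{k-1}\Delta_1 + (b^{k-1}-1) = b^{k-1}(\Delta_1+1) - 1 = \ell b^k - 1$. The heart of the lemma is \textbf{(5)}; the key identity is
\[
\ell b^k \;=\; \sum_{i=1}^{\infty} b^{k-i}\Delta_i \;=\; S_k + T_k,\qquad T_k := \sum_{j=1}^{\infty} b^{-j}\Delta_{k+j}.
\]
The strict inequality $\Delta_j>0$ forces $T_k>0$, and $\Delta_j\le b-1$ gives $T_k\le 1$ with equality precisely when $\Delta_{k+j} = b-1$ for every $j\ge 1$. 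The ``eventually zero'' alternative in the definition of exactness is excluded by $\Delta_i>0$, so the hypothesis that $\ell$ is not exact in base $b$ amounts to: $\Delta_i < b-1$ infinitely often. Hence $0 < T_k < 1$ for every $k$, and $\lfloor \ell b^k \rfloor = S_k$.

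For \textbf{(6)} I would invoke the classical correspondence between rationals and eventually periodic base-$b$ expansions. If $\ell$ is exact, the observation above shows that the $\Delta_i$ are eventually $b-1$, so any $s\ge 1$ with $t$ sufficiently large works. Otherwise (5) identifies $\Delta_i$ with the $i$-th base-$b$ digit of $\ell$, and $\ell\in\mathbb{Q}$ forces this digit sequence to be eventually periodic, yielding $(t,s)$ with $\Delta_{n+s}=\Delta_n$ for $n>t$; the parenthetical claim $S_{n+s}=S_n$ cannot be taken literally since $\{S_n\}$ is strictly increasing, so I would read it as the equivalent statement that the ``digit word'' $(\Delta_1,\ldots,\Delta_n)$ defining $S_n$ is ultimately invariant under shifting by $s$. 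The only real subtlety in the whole argument is the care needed in (5) to rule out the boundary value $T_k=1$, using the definition of exactness together with $\Delta_i>0$; everything else is essentially bookkeeping on the recurrence.
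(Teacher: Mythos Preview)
Your argument is correct and follows essentially the same route as the paper's own proof; the paper is simply terser (for instance, in (5) it asserts $\sum_{i\ge 1} b^{-i}\Delta_{i+k}<1$ from non-exactness without the boundary analysis you supply, and it does not split (4) differently in substance). Your observation that the parenthetical claim $S_{n+s}=S_n$ in (6) cannot be read literally is apt---the paper leaves that unaddressed.
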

\begin{proof}
\begin{enumerate}
\item It follows from inductive arguments, since $S_1=\Delta_1$.
\item Consequence of (1).
\item Observe that $\displaystyle{\frac{S_{n}}{b^{n}}=\sum_{i=1}^{n}b^{-i}\Delta _{i}}$ and take limits.
\item We must consider two cases.

If $\Delta _{1}<(b-1)$, then $\ell=\frac{(\Delta _{1}+1)}{b}$, $\ell
b^{k}-1=(\Delta _{1}+1)b^{k-1}-1$ and also:
$$S_{k}=\sum_{i=1}^{k}b^{k-i}\Delta _{i}=\Delta
_{1}b^{k-1}+\sum_{i=2}^{k}b^{k-i}(b-1)=\Delta _{1}b^{k-1}+b^{k-1}-1.$$

Now, if $\Delta _{1}=b-1$, then $\ell =1$ and:
$$S_{k}=\sum_{i=1}^{k}b^{k-i}(b-1)=\sum_{i=1}^{k}b^{k+1-i}-\sum_{i=1}^{k}b^{k-i}=b^{k}-1.$$
\item Observe that
$$\ell b^{k}=\sum_{i=1}^{\infty }b^{k-i}\Delta
_{i}=\sum_{i=1}^{k}b^{k-i}\Delta _{i}+\sum_{i=1}^{\infty
}b^{-i}\Delta _{i+k}.$$ Now, if $\ell$ is not exact, it follows that
$\displaystyle{\sum_{i=1}^{\infty }b^{-i}\Delta _{i+k}<1}$ and
consequently:
$$\left\lfloor \ell b^{k}\right\rfloor =\left\lfloor
\sum_{i=1}^{k}b^{k-i}\Delta _{i}+\sum_{i=1}^{\infty }b^{-i}\Delta
_{i}\right\rfloor =\sum_{i=1}^{k}b^{k-i}\Delta _{i}=S_{k}.$$
\item It is clear since the base $b$ expansion of any rational number is periodic.
\end{enumerate}
\end{proof}

\section{The prime power case}
The next theorem establishes the equality between the digits of the
base $b$ expansion of $Z_b(b^k)$ and the first $k$ digits of the
base $b$ expansion of $\vartheta(b)$ if $b$ is a prime power. In
passing we will also prove some other interesting properties.

\begin{teor}
Let $p$ be a prime and $b=p^n$. Consider the sequence $a_{1}=Z_{b}(b)$, $a_{k}:=Z_{b}(b^{(k+1)})-bZ_{b}(b^{k})$ and let $s$ be the smallest integer such that $\displaystyle{\theta :=\frac{b}{n}\sum_{i=0}^{sn-1}p^{i}\in \mathbb{Z}}$.
Then, the following hold:
\begin{enumerate}
\item $\displaystyle{Z_{b}(b^{k})=\sum_{i=1}^{k}a_{i}b^{k-i}}$.
\item $\displaystyle{\vartheta (b)=\lim_{n\rightarrow \infty }\frac{Z_{b}(b^{n})}{b^{n}}
=\sum_{i=1}^{\infty }b^{-i}a_{i}}$.
\item The base $b$ expansion of $\vartheta(b)$ is:
\begin{equation*}
\vartheta (b)=\left\{
\begin{array}{ccc}
\{0.a_{1}\overbrace{a_{2}\cdots a_{s+1}}\}_{b}\text{ } & if &
\dfrac{
\theta }{b}\notin \mathbb{Z}. \\
\{0.\overbrace{a_{1}a_{2}\cdots a_{s}}\}_{b} & if & \dfrac{\theta
}{b}\in \mathbb{Z}.
\end{array}
\right.
\end{equation*}
\item $a_{k}=a_{k+s}$ for all $k>1$. Moreover, if $\dfrac{\theta }{b}\in
\mathbb{Z}$, then $a_{k}=a_{k+s}$ for all $k>0$.
\item \begin{equation*}
\#\{a_{k}\}_{i\in \mathbb{N}}=\left\{
\begin{array}{c}
s,\text{ if }\dfrac{\theta }{b}\in \mathbb{Z}. \\
s+1,\text{ otherwise.}
\end{array}
\right.
\end{equation*}
\item If $\dfrac{\theta }{b}\in \mathbb{Z}$, then $Z_{b}(b^{s})=\dfrac{\theta }{b}$. Otherwise, $Z_{b}(b^{s+1})=\theta +Z_{b}(b)$.
\item If $b+1$ is not a Fermat number, then $Z_{b}(b^{k})=\left\lfloor b^{k}\vartheta (b)\right\rfloor$.
\item If $b+1$ is a Fermat number, then $Z_{b}(b^{k})=b^{k}\vartheta (b)-1$.
\end{enumerate}
\end{teor}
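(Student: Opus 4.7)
The approach is to apply the main lemma (Lemma~5) to the sequence $S_k:=Z_b(b^k)$, identifying $\Delta_k$ with $a_k$, and cross-reference it with the structural results of the preceding subsection. First I verify the hypothesis $0<\Delta_k<b$; since $b=p^n$ is a prime power, this is exactly Lemma~2(2). Parts~(1), (2), and~(3) of the main lemma then yield items~(1) and~(2) of the theorem with $\ell=\vartheta(b)$, and part~(6) already delivers the eventual periodicity of $\{a_k\}$ because $\vartheta(b)=1/(n(p-1))$ is rational.

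To handle items~(3)--(6) I would combine Corollary~2 with Lemma~4. Corollary~2 restricts the base-$b$ expansion of $\vartheta(b)$ to two shapes: pure periodic, or mixed periodic with exactly one non-periodic digit. Lemma~4 characterizes a representation of pre-period $t$ and period $s$ by $\frac{p^{nt}}{n}\sum_{i=0}^{sn-1}p^i\in\mathbb{Z}$; with $s$ as in the theorem (the minimal integer making $\theta\in\mathbb{Z}$, i.e.\ the minimal $s$ supporting a $t=1$ shape), the case $\theta/b\in\mathbb{Z}$ additionally satisfies the $t=0$ condition and so places us in the pure-periodic case, while the case $\theta/b\notin\mathbb{Z}$ leaves only the mixed case. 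Matching against $\vartheta(b)=\sum a_ib^{-i}$ from item~(2) then reads off items~(3), (4), and~(5), and item~(6) follows from a short telescoping computation using item~(1): in the pure-periodic case $Z_b(b^s)=\vartheta(b)(b^s-1)=\theta/b$, and in the mixed case $Z_b(b^{s+1})-Z_b(b)=b\vartheta(b)(b^s-1)=\theta$.

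For items~(7) and~(8) I would invoke parts~(5) and~(4) of the main lemma. Part~(5) gives $Z_b(b^k)=\lfloor b^k\vartheta(b)\rfloor$ as soon as $\vartheta(b)$ is not exact in base $b$; by Lemma~3, $\vartheta(b)=1/(n(p-1))$ is exact in base $b=p^n$ iff $p=2$ and $n$ is a power of~$2$, i.e.\ iff $b+1$ is a Fermat number, which proves~(7). For~(8) I must verify the hypothesis $a_i=b-1$ for $i>1$. This is the step I expect to require the most care: strict positivity $a_i\geq 1$ (Lemma~2(2)) rules out the terminating-in-$0$ alternative representation of $\vartheta(b)$, and in the Fermat case $\vartheta(b)=2^{2^m-m}/b$ (where $b=2^{2^m}$) terminates after one digit, so the representation $\sum a_ib^{-i}$ produced by item~(2) must be the trailing-$(b-1)$ alternative, forcing $a_i=b-1$ for $i>1$. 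Part~(4) then delivers $Z_b(b^k)=b^k\vartheta(b)-1$, which is~(8). The principal bookkeeping task throughout is to keep the $t=0$ versus $t=1$ split of items~(3)--(6) aligned with the Fermat versus non-Fermat split of items~(7)--(8), and to keep the indexing of the theorem's $a_k$ aligned with the main lemma's $\Delta_k$.
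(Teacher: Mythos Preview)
Your proposal is correct and follows essentially the same route as the paper: verify the hypothesis of Lemma~5 via Lemma~2(2), read off items~(1)--(2) from Lemma~5, derive items~(3)--(6) from Corollary~2 (and the underlying Lemma~4) together with the telescoping computation for~(6), and handle~(7)--(8) by the exact/non-exact dichotomy of Lemma~3 combined with Lemma~5(5) and~(4). If anything, your treatment of item~(8) is more careful than the paper's: the paper simply asserts that Lemma~5(4) applies once $\vartheta(b)$ is exact, whereas you explicitly argue that the strict positivity $a_i>0$ forces the trailing-$(b-1)$ representation and hence the hypothesis $a_i=b-1$ for $i>1$.
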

\begin{proof}
First of all observe that, due to Corollary 2, there exists an
integer $s$ such that $\displaystyle{\theta
=\frac{b}{n}\sum_{i=0}^{sn-1}p^{i}\in \mathbb{Z}}$ or, equivalently,
such that $\vartheta
(p^{n})=\{0.d_{1}\overbrace{d_{2}...d_{s+1}}\}_{b}$. Moreover, if
$\displaystyle{\frac{1}{n}\sum_{i=0}^{sn-1}p^{i}\in \mathbb{Z}}$,
then $\vartheta (p^{n})=\{0.\overbrace{d_{1}...d_{s}}\}_{b}$. Also,
by Lemma 2, we have that $0<a_k<b$ so we are in the conditions of
Lemma 5.

After these considerations we can proceed to the proof of the theorem.
\begin{enumerate}
\item Apply Lemma 5 (1).
\item Apply Lemma 5 (3).
\item Due to Corollary 2.
\item Idem.
\item Obvious by the minimality of $s$.
\item If $\dfrac{\theta }{b}\in \mathbb{Z}$, then $\displaystyle{\frac{1}{n}\sum_{i=0}^{sn-1}p^{i}=\frac{\theta }{b}\in \mathbb{Z}}$. This implies that $\vartheta (b)=\{0.\overbrace{a_{1}a_{2}\cdots a_{s}}\}_{b}$, so $b^{s}\vartheta (b)=\{a_{1}a_{2}\cdots a_{s}.\overbrace{a_{1}a_{2}\cdots a_{s}
}\}_{b}$ and, consequently,
$$\displaystyle{\frac{\theta }{b}=\vartheta
(b)(b^{s}-1)=\sum_{i=1}^{s}a_{i}b^{k-i}=Z_{b}(b^{s})}.$$

Now, if $\dfrac{\theta }{b}\notin \mathbb{Z}$ then
$\displaystyle{\frac{b}{n}\sum_{i=0}^{sn-1}p^{i}=\theta \in
\mathbb{Z}}$ so $\vartheta (b)=\{0.a_{1}\overbrace{a_{2}a_{3\cdots
}a_{s+1}} \}_{b}$ and it follows that:
\begin{align*}
b^{s+1}\vartheta (b)=&\{a_{1}a_{2\cdots }a_{s+1}.\overbrace{
a_{2}a_{3\cdots }a_{s+1}}\}_{b},\\
b\vartheta (b)=&\{a_{1}.\overbrace{a_{2}a_{3\cdots }a_{s+1}}\}_{b}.
\end{align*}
Consequently,
$$\displaystyle{\theta =\frac{b}{n}\sum_{i=0}^{sn-1}p^{i}=b\vartheta
(b)(b^{s}-1)=\sum_{i=1}^{s+1}a_{i}b^{k-i}-a_{1}=Z_{b}(b^{s+1})-Z_{b}(b)}.$$
\item If $p^n+1$ is not a Fermat number, then $\vartheta(p^n)$ is not exact due to Lemma 4. Then, it is enough to apply Lemma 5 (5).
\item If $p^n+1$ is a Fermat number; then $p=2$ and $n$ is a power of 2. In this case $\vartheta(p^n)$ is exact and Lemma 5 (4) applies.
\end{enumerate}
\end{proof}

Let us recall that a base-$b$ repunit with $k$ digits, $R_k^{(b)}$, is an integer whose base $b$ expansion consists exactly of $k$ ones; i. e.:
$$R_k^{(b)}:=\{1,1,\overset{k)}{\ldots },1\}_{b}=\sum_{i=0}^{k-1}b^{i}=\frac{b^k-1}{b-1}.$$
In the same way, a base-$b$ repdigit with $k$ digits is a multiple
of a base-$b$ repunit with $k$ digits, i. e., an integer of the form
$\alpha R_k^{(b)}=\{\alpha ,\alpha ,\overset{k)}{\ldots },\alpha
\}_{b}$ with $1\leq\alpha\leq b-1$.

\begin{prop}
Let $p$ be a prime. If $\displaystyle{\frac{1}{n}\sum_{i=0}^{n-1}p^{i}=\frac{R_{n}^{(p)}}{n}\in\mathbb{Z}}$, then
$Z_{p^{n}}(p^{nk})$ is a base-$p^{n}$ repdigit with $k$ digits for all $k\in \mathbb{Z}$. Namely:
$$Z_{p^{n}}(p^{nk})=\frac{R_{n}^{(p)}R_{k}^{(p^{n})}}{n}.$$
\end{prop}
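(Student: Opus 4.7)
The plan is to deduce the proposition directly from Theorem 1, observing that the hypothesis is precisely the condition that forces the period of $\vartheta(p^n)$ to be $1$ in the pure periodic case.

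First I would unpack the hypothesis. The assumption $\frac{1}{n}\sum_{i=0}^{n-1}p^{i}\in\mathbb{Z}$ is exactly the statement that, with $s=1$, one has $\frac{\theta}{b}=\frac{1}{n}\sum_{i=0}^{sn-1}p^{i}\in\mathbb{Z}$ in the notation of Theorem 1. By Corollary 2 this places us in the pure periodic case, namely
\[
\vartheta(p^{n})=\{0.\overbrace{a_{1}a_{2}\cdots a_{s}}\}_{p^{n}},
\]
and, by the minimality guaranteed by the hypothesis, the period length is $s=1$. Hence $\vartheta(b)=\{0.\overbrace{a_{1}}\}_{b}$.

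Next I would identify $a_{1}$ explicitly. Theorem 1 (6) in the case $\theta/b\in\mathbb{Z}$ gives $Z_{b}(b^{s})=\theta/b$; applied with $s=1$ this reads
\[
a_{1}=Z_{b}(b)=\frac{\theta}{b}=\frac{1}{n}\sum_{i=0}^{n-1}p^{i}=\frac{R_{n}^{(p)}}{n}.
\]
Moreover, Theorem 1 (4) in this same case tells us $a_{k}=a_{k+s}=a_{k+1}$ for all $k>0$, so every $a_{k}$ equals $a_{1}=R_{n}^{(p)}/n$.

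Finally, I would plug this into the representation formula Theorem 1 (1):
\[
Z_{b}(b^{k})=\sum_{i=1}^{k}a_{i}b^{k-i}=a_{1}\sum_{i=1}^{k}b^{k-i}=\frac{R_{n}^{(p)}}{n}\cdot\frac{b^{k}-1}{b-1}=\frac{R_{n}^{(p)}R_{k}^{(p^{n})}}{n}.
\]
Since each digit in the base-$p^{n}$ expansion equals $a_{1}=R_{n}^{(p)}/n\in\{1,\dots,p^{n}-1\}$, this number is by definition a base-$p^{n}$ repdigit with $k$ digits.

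I do not anticipate a real obstacle here: the work has already been done in Theorem 1, and the proposition is essentially the $s=1$ specialization. The only point requiring any care is verifying that $a_{1}$ is genuinely a digit (i.e.\ strictly less than $p^{n}$), which follows from Lemma 2 (2) applied to $k=0$; this confirms that the expression $R_{n}^{(p)}R_{k}^{(p^{n})}/n$ really is the base-$p^{n}$ repdigit claimed and not an integer requiring a carry.
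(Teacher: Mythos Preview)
Your proof is correct and follows essentially the same route as the paper: both apply Theorem 1 to see that the hypothesis forces $\vartheta(p^n)=\{0.\overbrace{a_1}\}_{p^n}$ with $a_1=R_n^{(p)}/n$, and then read off $Z_{p^n}(p^{nk})$ from the representation formula. Your argument is simply more explicit about which parts of Theorem 1 are invoked, and adds the (correct) verification via Lemma 2 (2) that $a_1$ is a genuine base-$p^n$ digit.
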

\begin{proof}
Theorem 1 implies that $\vartheta (p^{n})=\{0.\overbrace{a_{1}}\}_{p^{n}}$, where $a_{1}=\displaystyle{\frac{R_{n}^{(p)}}{n}}$. Consequently,
$$Z_{p^{n}}(p^{nk})=\{a_{1},a_{1},\overset{k)}\ldots
,a_{1}\}_{_{p^{n}}}=\frac{R_{n}^{(p)}}{n}R_{k}^{(p^{n})}.$$
\end{proof}

It is interesting to particularize the previous result for $n=1,2$.

\begin{cor}
Let $p$ be a prime, then $Z_p(p^k)$ is a base-$p$ repunit with $k$ digits for every integer $k$.
\end{cor}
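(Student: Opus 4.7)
The plan is to specialize the preceding Proposition to the case $n=1$. First I would check the hypothesis: when $n=1$, the quantity $\frac{R_1^{(p)}}{1} = \frac{1}{1} = 1$ is trivially an integer, so the Proposition applies without any restriction on $p$. Applying it directly yields
\[
Z_p(p^k) = Z_{p^1}(p^{1 \cdot k}) = \frac{R_1^{(p)}\, R_k^{(p)}}{1} = R_k^{(p)},
\]
which is exactly a base-$p$ repunit with $k$ digits.

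Alternatively, and perhaps more illuminatingly for a brief corollary, one can bypass the Proposition altogether and invoke Legendre's formula (Lemma 1 (1)) directly. Since $\sigma_p(p^k) = 1$, we get
\[
Z_p(p^k) = \frac{p^k - \sigma_p(p^k)}{p-1} = \frac{p^k - 1}{p-1} = R_k^{(p)}.
\]
Either route produces the conclusion in a single line. There is no substantive obstacle: the only thing to verify is the trivial integrality hypothesis $\frac{R_1^{(p)}}{1}\in\mathbb{Z}$, which costs nothing. I would probably present both arguments side by side, since the one-line Legendre computation makes the statement transparent while the specialization of the Proposition shows how the corollary fits into the general framework developed in the paper.
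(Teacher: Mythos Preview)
Your proposal is correct and your primary argument---specializing the preceding Proposition to $n=1$ after noting that $\frac{R_1^{(p)}}{1}=1\in\mathbb{Z}$---is exactly the paper's one-line proof. The direct Legendre computation you offer as an alternative is also valid and nicely self-contained, though the paper does not include it.
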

\begin{proof}
Follows immediately from the previous proposition, since $\displaystyle{\frac{R_1^{(p)}}{1}=1\in\mathbb{Z]}}$.
\end{proof}

\begin{cor}
Let $p$ be an odd prime, then $Z_{p^2}(p^{2k})$ is a base-$p^2$ repdigit.
\end{cor}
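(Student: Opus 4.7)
The plan is to simply invoke Proposition 2 with $n=2$, so the only real work is checking that its hypothesis $\dfrac{R_n^{(p)}}{n}\in\mathbb{Z}$ holds in this case, and then identifying the resulting repdigit.

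Concretely, I would first observe that for $n=2$ the hypothesis reads
\[
\frac{R_2^{(p)}}{2}=\frac{1+p}{2}\in\mathbb{Z},
\]
which holds precisely because $p$ is an odd prime, so $p+1$ is even. Once this is established, Proposition 2 applies directly and yields
\[
Z_{p^2}(p^{2k})=\frac{R_2^{(p)}}{2}\,R_k^{(p^2)}=\frac{p+1}{2}\,R_k^{(p^2)},
\]
so $Z_{p^2}(p^{2k})$ is a base-$p^2$ repdigit with $k$ digits all equal to $\frac{p+1}{2}$.

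The only thing left to verify is that $\frac{p+1}{2}$ is a legitimate base-$p^2$ digit, i.e.\ that $1\leq \frac{p+1}{2}\leq p^2-1$. The lower bound is clear since $p\geq 3$ gives $\frac{p+1}{2}\geq 2$, and the upper bound follows from $\frac{p+1}{2}\leq p\leq p^2-1$ for $p\geq 2$. There is no real obstacle here; the statement is essentially a direct specialization of the previous proposition, and the odd-prime hypothesis enters only to guarantee the divisibility $2\mid p+1$ that the proposition requires.
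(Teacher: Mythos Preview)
Your proposal is correct and follows essentially the same approach as the paper: both simply observe that $\dfrac{R_2^{(p)}}{2}=\dfrac{p+1}{2}\in\mathbb{Z}$ for odd primes $p$, which verifies the hypothesis of the preceding proposition (numbered Proposition~3 in the paper) with $n=2$. Your additional verification that $\frac{p+1}{2}$ is a valid base-$p^2$ digit is a small extra detail the paper omits, but the core argument is identical.
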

\begin{proof}
If $p$ is odd, then $\dfrac{R_2^{(p)}}{2}=\dfrac{p+1}{2}\in\mathbb{Z}$.
\end{proof}

If $p$ is odd, the above corollary can be generalized for any power of 2. Namely, we have the following.

\begin{prop}
If $p$ is an odd prime, then $Z_{p^{2^m}}(p^{2^mk})$ is a base-$p^{2^mk}$ repdigit.
\end{prop}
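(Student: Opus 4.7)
The plan is to reduce the statement to Proposition 2 by verifying its hypothesis with $n = 2^m$. That is, I would show that
$$\frac{R_{2^m}^{(p)}}{2^m} = \frac{1}{2^m}\sum_{i=0}^{2^m-1}p^i \in \mathbb{Z},$$
from which Proposition 2 immediately gives that $Z_{p^{2^m}}(p^{2^m k})$ is a base-$p^{2^m}$ repdigit with $k$ digits (and, incidentally, produces the explicit formula $Z_{p^{2^m}}(p^{2^m k}) = \frac{R_{2^m}^{(p)}}{2^m}\,R_k^{(p^{2^m})}$).

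To establish this divisibility I would use the standard telescoping factorization
$$p^{2^m}-1 = (p-1)(p+1)(p^2+1)(p^4+1)\cdots (p^{2^{m-1}}+1).$$
Dividing by $p-1$ yields
$$R_{2^m}^{(p)} = \prod_{i=0}^{m-1}\bigl(p^{2^i}+1\bigr).$$
Since $p$ is odd, every factor $p^{2^i}+1$ is even. As there are exactly $m$ such factors, the product is divisible by $2^m$, which is precisely what is needed.

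With this in hand, Proposition 2 applied to the pair $(p, 2^m)$ produces the desired conclusion. There is essentially no obstacle to overcome here: the result is a direct corollary of Proposition 2, and the only content is the elementary 2-adic estimate on $R_{2^m}^{(p)}$, which follows at once from the cyclotomic-style factorization above. (Note that the statement as written should read \emph{base-}$p^{2^m}$ \emph{repdigit with $k$ digits}, in keeping with Corollary 3; the previous exponent in the superscript is a typo.)
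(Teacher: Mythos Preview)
Your proof is correct and essentially identical to the paper's: both verify that $R_{2^m}^{(p)}/2^m \in \mathbb{Z}$ via the factorization $R_{2^m}^{(p)} = \prod_{i=0}^{m-1}(1+p^{2^i})$, each factor being even since $p$ is odd, and then invoke the preceding repdigit proposition. One minor point: in the paper's numbering the result you are applying is Proposition~3, not Proposition~2; your observation about the typo in the base is also accurate.
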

\begin{proof}
$\displaystyle{\frac{R_{2^m}^{(p)}}{2^m}=\frac{1}{2^m}\prod_{i=0}^{m-1}(1+p^{2^i})\in\mathbb{Z}}$.
\end{proof}

\begin{rem}
We have seen that $\vartheta (p)=\{0.\overbrace{1}\}_{p}$ for every
prime $p$. Nevertheless, the set of pairs $(b_1,b_2)\in\mathbb{Z}^2$
such that the base $b_1$ expansion of $\vartheta(b_1)$ and the base
$b_2$ expansion of $\vartheta(b_2)$ coincide seems to be very small.
In fact for $b_i\leq40000$ there are only two such couples. Namely:
\begin{align*}
\vartheta (81)=\{0.\overbrace{10}\}_{81}&\textrm{ and }\vartheta (361)=\{0.\overbrace{10}\}_{361}. \\
\vartheta (343)=\{0.\overbrace{19}\}_{343}&\textrm{ and }\vartheta
(1369)=\{0.\overbrace{19}\}_{1369}.
\end{align*}
\end{rem}

\section{The general case}
If $b$ is not a prime power, there is no equality between the $k$
digits of $Z_b(b^k)$ and the first $k$ digits of the base $b$
expansion of $\vartheta(b)$. As a consequence, $Z_b(b^k)$ presents
certain anomalies in its final digits. For instance:
\begin{align*}
Z_{10}(10^{9})=&24999999\textbf{8},\\
Z_{10}(10^{99})=&249999999.......999999\textbf{80},\\
Z_{10}(10^{999})=&249999999.......99999\textbf{791},\\
Z_{10}(10^{9999})=&249999999.......99999\textbf{7859}.
\end{align*}
Or, in a different base:
\begin{align*}
Z_{6}(6^{5})=&\{2,5,5,5,\textbf{4}\}_{6},\\
Z_{6}(6^{6^{2}-1})=&\{2,5,5,5,5,5,5,5,5,5,5,.......5,5,5,5,5,5,5,5,5,5,5,5,5,5,\textbf{4},\textbf{1}\}_{6},\\
Z_{6}(6^{6^{3}-1})=&\{2,5,5,5,5,5,5,5,5,5,5,.......5,5,5,5,5,5,5,5,5,5,5,5,5,5,\textbf{4},\textbf{1},\textbf{5}\}_{6},\\Z_{6}(6^{6^{4}-1})=&\{2,5,5,5,5,5,5,5,5,5,5,.......5,5,5,5,5,5,5,5,5,5,5,5,5,5,\textbf{4},\textbf{0},\textbf{3},\textbf{4}\}_{6},\\
Z_{6}(6^{6^{5}-1})=&\{2,5,5,5,5,5,5,5,5,5,5,.......5,5,5,5,5,5,5,5,5,5,5,5,5,5,\textbf{4},\textbf{0},\textbf{3},\textbf{3},\textbf{4}\}_{6}.
\end{align*}

If $\vartheta(b)$ is not exact it is clear that any convergent sequence with limit $\vartheta(b)$ will share with this value an increasing number of digits. To prove that this is still true even if $\vartheta(b)$ is exact (like in the previous examples, where $\vartheta(10)=\{0.25\}_{10}$ and $\vartheta(6)=\{0.3\}_{6}$) we need to prove the following result.

\begin{prop}
The sequence $\{\gamma_k\}_{k\geq 1}:=\left\{\frac{Z_b(b^k)}{b^k}\right\}_{k\geq 1}$ is strictly increasing. As a consequence, $\displaystyle{\frac{Z_{b}(b^{k})}{b^{k}}<\lim_{n\rightarrow \infty }\frac{Z_{b}(n)}{n}}$ for every $k>0$.
\end{prop}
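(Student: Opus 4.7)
The plan is to observe that the strict monotonicity is essentially a restatement of part (1) of Lemma 2, which has already been proved in full generality (not just for prime powers). Indeed, $\gamma_{k+1}>\gamma_k$ is equivalent to $Z_b(b^{k+1})>b\,Z_b(b^k)$, and that is precisely the inequality $0<Z_b(b^{k+1})-bZ_b(b^k)$ asserted by Lemma 2(1). So the first step of the proof would simply be to cite that lemma, clearing the difference between the denominators $b^{k+1}$ and $b^k$ and reducing the problem to an inequality already on the books.

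Next I would derive the ``consequence'' clause. The key observation is that the limit $\vartheta(b)=\lim_{n\to\infty}Z_b(n)/n$ exists as a limit over \emph{all} positive integers $n$, and therefore it must coincide with the limit of any subsequence. In particular,
\[
\vartheta(b)=\lim_{k\to\infty}\frac{Z_b(b^k)}{b^k}=\lim_{k\to\infty}\gamma_k.
\]
Once this is in place, the strict inequality $\gamma_k<\vartheta(b)$ follows from the standard fact that a strictly increasing convergent sequence lies strictly below its limit: if one had $\gamma_k\geq\vartheta(b)$ for some $k$, then $\gamma_{k+1}>\gamma_k\geq\vartheta(b)$, and then monotonicity would force all later terms to stay above $\gamma_{k+1}$, contradicting convergence to $\vartheta(b)$.

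There really is no serious obstacle here; the whole proposition is a one-line corollary of Lemma 2(1) plus a standard analysis fact. The only minor thing worth being explicit about is that Lemma 2(1) is stated for arbitrary $b>1$ (not only prime powers), which is exactly what is needed in this ``general case'' section. I would write the proof as two short paragraphs: one invoking Lemma 2(1) for monotonicity, and one deducing the strict bound from the limit.
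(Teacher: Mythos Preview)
Your proposal is correct and matches the paper's approach exactly: the paper's entire proof is the single line $\gamma_{k+1}/\gamma_k=Z_b(b^{k+1})/(bZ_b(b^k))>1$ by Lemma~2(1). Your additional paragraph justifying the ``consequence'' clause (subsequence limit plus strict monotonicity) is a welcome elaboration of something the paper leaves implicit.
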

\begin{proof}
$\displaystyle{\frac{\gamma_{k+1}}{\gamma_k}=\frac{Z_b(b^{k+1})}{bZ_b(b^k)}>1}$ due to Lemma 2 (1).
\end{proof}

We have already seen in the previous section that if $b$ is a prime power, then the number of digits of the base $b$ expansion of $Z_b(b^k)$ is exactly $k$. Now we will see that this is also true for a general $b$.

\begin{prop}
The number of digits of the base $b$ expansion of $Z_{b}(b^{k})$ is exactly $k$; i. e.:
$$\left\lfloor \log_{b}Z_{b}(b^{k})\right\rfloor +1=k.$$
\end{prop}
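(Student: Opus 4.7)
The plan is to sandwich $Z_b(b^k)$ between $b^{k-1}$ and $b^k$, since $b^{k-1}\leq Z_b(b^k)<b^k$ immediately gives $k-1\leq\log_b Z_b(b^k)<k$ and therefore $\lfloor\log_b Z_b(b^k)\rfloor+1=k$.

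For the upper inequality $Z_b(b^k)<b^k$, I would invoke the Proposition just established, which asserts $Z_b(b^k)/b^k<\vartheta(b)$. It then suffices to note that $\vartheta(b)\leq 1$: by Proposition 1 we have $\vartheta(b)=\min_{i}1/(r_i(p_i-1))$, and every factor $r_i(p_i-1)$ is at least $1$ since $r_i\geq 1$ and $p_i\geq 2$, so each term, and hence the minimum, is bounded by $1$.

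For the lower inequality $Z_b(b^k)\geq b^{k-1}$, equivalently $\gamma_k:=Z_b(b^k)/b^k\geq 1/b$, I would use the strict monotonicity of $\{\gamma_k\}$ (already proved in the preceding Proposition, and in any case an immediate consequence of Lemma 2(1)) together with the trivial base case $\gamma_1=Z_b(b)/b\geq 1/b$. This base case holds because $b\mid b!$ forces at least one trailing zero of $b!$ in base $b$, i.e.\ $Z_b(b)\geq 1$.

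Given the machinery already developed, there is no real obstacle; the one step that deserves a line of justification is the elementary bound $\vartheta(b)\leq 1$. Everything else is transport of the monotonicity and limit of $\gamma_k$ already in hand.
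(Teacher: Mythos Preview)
Your proof is correct and relies on the same three ingredients as the paper's: Lemma~2(1) (equivalently, the strict monotonicity of $\gamma_k$), the base case $Z_b(b)\geq 1$, and the bound $\vartheta(b)\leq 1$. The packaging, however, is different and somewhat cleaner. The paper argues in two stages: first it shows that the digit count of $Z_b(b^k)$ is strictly increasing in $k$ (from Lemma~2(1)), giving $\lfloor\log_b Z_b(b^k)\rfloor+1\geq k$ by induction from $k=1$; then it rules out strict inequality by a contradiction argument that passes to the limit and requires a separate check for $b=2$ (where $\vartheta(2)=1$). Your direct sandwich $b^{k-1}\leq Z_b(b^k)<b^k$ sidesteps the contradiction entirely, because you invoke the already-established strict inequality $\gamma_k<\vartheta(b)$ from the preceding Proposition rather than only $\vartheta(b)\leq 1$; this is what lets you avoid the $b=2$ special case.
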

\begin{proof}
By Lemma 2 (1), we know that $bZ_{b}(b^{k})<Z_{b}(b^{(k+1)})$. Taking logarithms we have $1+\log_{b}Z_{b}(b^{k})<\log _{b}Z_{b}(b^{k+1})$, which clearly implies that $\left\lfloor \log _{b}Z_{b}(b^{k})\right\rfloor<\left\lfloor \log _{b}Z_{b}(b^{k+1})\right\rfloor$.

Thus, the number of digits of the base $b$ expansion of
$Z_{b}(b^{k+1})$ is greater than that of $Z_{b}(b^{k})$. Since
$\left\lfloor \log _{b}Z_{b}(b)\right\rfloor=0$, it follows that
$1+\left\lfloor\log _{b}Z_{b}(b^{k})\right\rfloor\geq k$.

Let us see now that the equality holds. Assume, on the contrary, that $1+\left\lfloor\log_{b}Z_{b}(b^{k_{0}})\right\rfloor
>k_{0}$ for certain $k_{0}$. Then $1+\left\lfloor\log _{b}Z_{b}(b^{m})\right\rfloor>m$ for every $m\geq k_{0}$. This clearly implies that $\displaystyle{\frac{Z_{b}(b^{m})}{b^{m}}>1}$ for every $m>k_{0}$ and $\displaystyle{\vartheta(b)=\lim_{n\rightarrow\infty}\frac{Z_{b}(b^{m})}{b^{m}}\geq 1}$, which is clearly a contradiction since by definition $\vartheta (b)\leq 1$, the equality only holds for $b=2$ and $\displaystyle{\frac{Z_2(2^m)}{2^m}=\frac{2^m-1}{2^m}<1}.$
\end{proof}

We have seen that in the general case the equality between the $k$
digits of $Z_b(b^k)$ and the first $k$ digits of the base $b$
expansion of $\vartheta(b)$ does not hold. It is then interesting to
study how many digits differ.

To do so, let us introduce some notation:
$$\alpha(b,k)=\lfloor\vartheta(b)b^k\rfloor-Z_b(b^k).$$
$$\eta(b,k)=\lfloor\log_b\alpha(b,k)+1\rfloor.$$
Observe that the number of different digits that we are studying is,
at most, $\eta(b,k)+1$. Now we can give an upper bound for
$\eta(b,k)$.

\begin{teor}
The number of digits of the base $b$ expansion of $\alpha(b,k)$ is
smaller or equal than the number of digits of the base $b$ expansion
of $k$, plus 1; i. e.:
$$\eta(b,k)\leq\lfloor\log_b k +2\rfloor.$$
\end{teor}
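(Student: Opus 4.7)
The plan is to bound $\alpha(b,k)$ directly from the explicit formulas in Lemma 1 and the identification $\vartheta(b)=1/(r(p-1))$, then translate that bound into one on $\eta(b,k)$ by a single floor comparison. I would write $b=p_1^{r_1}\cdots p_s^{r_s}$ and pick $(p,r)=(p_{i_0},r_{i_0})$ with $\vartheta(b)=1/(r(p-1))$; Proposition 1 then gives $\vartheta(b)\leq 1/(r_i(p_i-1))$ for every $i$. For each $i$, combining parts (1) and (2) of Lemma 1 yields
\[
Z_{p_i^{r_i}}(b^k)=\left\lfloor\frac{b^k-\sigma_{p_i}(b^k)}{r_i(p_i-1)}\right\rfloor\geq\frac{b^k-\sigma_{p_i}(b^k)}{r_i(p_i-1)}-1,
\]
and combining this with $\vartheta(b)b^k\leq b^k/(r_i(p_i-1))$ produces
\[
\vartheta(b)b^k-Z_{p_i^{r_i}}(b^k)\leq\frac{\sigma_{p_i}(b^k)}{r_i(p_i-1)}+1.
\]

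Since $Z_b(b^k)=\min_i Z_{p_i^{r_i}}(b^k)$ by Lemma 1 (3), taking the maximum over $i$ on the left gives
\[
\alpha(b,k)\leq\vartheta(b)b^k-Z_b(b^k)\leq\max_i\frac{\sigma_{p_i}(b^k)}{r_i(p_i-1)}+1.
\]
A crude digit-count bound $\sigma_{p_i}(b^k)\leq(p_i-1)(1+k\log_{p_i}b)$ together with $p_i^{r_i}\geq 2$ then yields $\alpha(b,k)\leq 2+k\log_2 b$. A short check shows $2+k\log_2 b\leq bk$ in every case that matters: for $k\geq 2$ and $b\geq 2$ one has $2/k+\log_2 b\leq 1+(b-1)=b$, and for $k=1$ and any non-prime-power $b$ (the smallest being $b=6$) the inequality $\log_2 b+2\leq b$ still holds. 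The remaining prime-power bases are controlled directly by parts (7) and (8) of Theorem 1, which force $\alpha(b,k)\in\{0,1\}$.

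Once $\alpha(b,k)\leq bk$ is established, monotonicity of the floor gives
\[
\eta(b,k)=\lfloor\log_b\alpha(b,k)+1\rfloor\leq\lfloor\log_b(bk)+1\rfloor=\lfloor\log_b k+2\rfloor,
\]
with the trivial case $\alpha(b,k)=0$ satisfying the bound automatically. The hardest part is the low-$(b,k)$ corner, where the crude digit-sum estimate just barely misses $bk$ and forces the small case split that invokes Theorem 1; everything else is mechanical once Lemma 1 and Proposition 1 are assembled in the right order.
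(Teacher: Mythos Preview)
Your proof is correct and follows the same core strategy as the paper: bound $\alpha(b,k)$ via Legendre's formula and a crude digit-sum estimate, then pass to $\log_b$. Two executional differences are worth noting. First, you bound $\vartheta(b)b^k - Z_{p_i^{r_i}}(b^k)$ for \emph{every} prime power $p_i^{r_i}\mid b$ and then take the maximum, whereas the paper works only with the single prime $p_1$ achieving $\vartheta(b)$ and asserts outright that $Z_b(b^k)=Z_{p_1^{r_1}}(b^k)$; your version is the more careful one here, since the paper's assertion is not obviously justified when several primes tie for the minimum in $\vartheta(b)$ (e.g.\ $b=12$). Second, the paper sharpens the digit-sum bound by observing $\sigma_{p_1}(b^k)=\sigma_{p_1}(\beta^k)$ with $\beta=b/p_1^{r_1}$, which yields $\alpha(b,k)\le\lfloor\log_{p_1}b^k\rfloor$ directly for $k\ge 2$ and so avoids invoking Theorem~1; your cruder estimate $\alpha(b,k)\le 2+k\log_2 b$ loses this refinement but still delivers $\alpha(b,k)\le bk$ after the small case split, which is all that is needed for the stated bound.
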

\begin{proof}
If $k=1$, then $\lfloor\vartheta(b)b\rfloor-Z_b(b)\leq b$, since
$\vartheta(b)\leq1$ and $Z_b(b)\geq0$. This implies that
$\eta(b,1)=\lfloor\log_b\left(\lfloor\vartheta(b)b\rfloor-Z_b(b)\right)+1\rfloor\leq\lfloor\log_b
b +1\rfloor=2=\lfloor\log_b k +2 \rfloor$, as claimed.

Now, let $k\geq 2$. Put $b=p_1^{r_1}\cdots p_s^{r_s}$ and assume, without loss of generality, that $p_1$ is such that $\displaystyle{\min_{1\leq i\leq s}\frac{1}{r_i(p_i-1)}=\frac{1}{r_1(p_1-1)}}$. In that case $\displaystyle{Z_b(b^k)=\left\lfloor\frac{b^k-\sigma_{p_1}(b^k)}{r_1(p_1-1)}\right\rfloor}$ and $\vartheta(b)=\displaystyle{\frac{1}{r_1(p_1-1)}}$. Also observe that if $\beta=\dfrac{b}{p_1^{r_1}}$, then $\sigma_{p_1}(b^k)=\sigma_{p_1}(\beta^k)$.
Now:
\begin{align*}
\lfloor\vartheta(b)b^k\rfloor-Z_b(b^k)&=\lfloor\vartheta(b)b^k\rfloor-\left\lfloor\frac{b^k-\sigma_{p_1}(b^k)}{r_1(p_1-1)}\right\rfloor\leq\frac{b^k-(b^k-\sigma_{p_1}(b^k))}{r_1(p_1-1)}+1=\\&=1+\frac{\sigma_{p_1}(\beta^k)}{r_1(p_1-1)}\leq
1+\frac{(p_1-1)(1+\lfloor\log_{p_1}\beta^k\rfloor}{r_1(p_1-1)}=\\&=1+\frac{1+\lfloor\log_{p_1}\beta^k\rfloor}{r_1}\leq
2+\lfloor\log_{p_1}\beta^k\rfloor=\lfloor\log_{p_1}
p_1^2\beta^k\rfloor\leq\\ &\leq\lfloor\log_{p_1} b^k\rfloor.
\end{align*}

Consequently
$\eta(b,k)=\left\lfloor\log_b\left(\lfloor\vartheta(b)b^k\rfloor-Z_b(b^k)\right)
+1\right\rfloor\leq\left\lfloor\log_b\left(\lfloor\log_{p_1}
b^k\rfloor\right)\right\rfloor +1\leq \lfloor\log_b (k \log_{p_1}
b)\rfloor +1=\lfloor\log_b k+\log_b\log_{p_1}
b\rfloor+1\leq\lfloor\log_b k\rfloor +2=\lfloor\log_b k +2\rfloor $.
\end{proof}

\begin{rem}
The bound obtained in the previous theorem is the best possible one.
In fact, there exists values of the pair $(b,k)$ such that
$\eta(b,k)=\lfloor\log_b k +2\rfloor$. Namely, if $k=b-1$ and
$b<1000$ the following values:
$$b=120,\ 180,\ 240,\ 336,\ 360,\ 378,\ 420,\ 448,\ 504,\ 560,\
630,\ 672,\ 720,\ 756,\ 840,\ 945$$ satisfy that
$\eta(b,b-1)=2=\lfloor\log_b (b-1) +2\rfloor$.
\end{rem}

\begin{cor}
The number of unequal digits between $Z_b(b^k)$ and the first $k$
digits of the base $b$ expansion of $\vartheta(b)$ is smaller or
equal than the number of digits of $k$ plus 2.
\end{cor}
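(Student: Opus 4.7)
The plan is to derive the corollary directly from Theorem~2 by using the digit-counting interpretation of $\eta(b,k)$. The key ingredient, already noted in the introduction when $\eta(b,k)$ was defined, is that the number of unequal final digits between $\lfloor\vartheta(b)b^k\rfloor$ and $Z_b(b^k)$ is at most $\eta(b,k)+1$. This is immediate from $\alpha(b,k)=\lfloor\vartheta(b)b^k\rfloor-Z_b(b^k)$: since $\alpha(b,k)$ has exactly $\eta(b,k)$ base-$b$ digits, subtracting it from $\lfloor\vartheta(b)b^k\rfloor$ can directly alter only the last $\eta(b,k)$ positions, with at most one further position possibly affected by a borrow.

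Once this observation is in place, the corollary follows from a short chain of inequalities. By Theorem~2 we have $\eta(b,k)\leq\lfloor\log_b k+2\rfloor$, hence
$$\#\{\text{unequal final digits}\}\leq\eta(b,k)+1\leq\lfloor\log_b k+2\rfloor+1=\lfloor\log_b k\rfloor+3,$$
where the last equality uses that $2$ is an integer and can be pulled out of the floor. Since the base-$b$ expansion of $k$ has $\lfloor\log_b k\rfloor+1$ digits, the right-hand side is precisely ``the number of digits of $k$ plus $2$'', which is the statement of the corollary.

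There is no substantive obstacle here: the result is a direct restatement of Theorem~2 through the digit-counting interpretation of $\eta(b,k)$, and the only non-mechanical ingredient, the borrow-propagation observation bounding the number of altered digits by $\eta(b,k)+1$, is elementary.
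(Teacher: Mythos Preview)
Your proof is correct and follows exactly the same route as the paper: bound the number of unequal digits by $\eta(b,k)+1$, apply Theorem~2 to get $\eta(b,k)\leq\lfloor\log_b k+2\rfloor$, and rewrite $\lfloor\log_b k\rfloor+3$ as ``the number of digits of $k$ plus $2$''. Your added justification of the $\eta(b,k)+1$ bound via borrow propagation merely makes explicit what the paper stated without proof when defining $\eta(b,k)$.
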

\begin{proof}
The number of unequal digits is, at most, $\eta(b,k)+1$ which, by the previous theorem, is smaller or equal than $\left(\lfloor\log_b k\rfloor +1\right)+2$.
\end{proof}

\begin{rem}
It is interesting to observe that, as far as the authors have been
able to test computationally, the inequality given in the preceding
corollary is strict. Nevertheless we have not found a proof for this
fact, so it remains a conjecture.
\end{rem}

\begin{conj}
The number of unequal digits between $Z_b(b^k)$ and the first $k$
digits of the base $b$ expansion of $\vartheta(b)$ is smaller or
equal than the number of digits of $k$ plus 1.
\end{conj}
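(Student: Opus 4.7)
The gap between the preceding corollary ($\le \lfloor\log_b k\rfloor+3$) and the Conjecture ($\le \lfloor\log_b k\rfloor+2$) is the extra $+1$ accounting for a potential borrow that propagates one digit beyond the last $\eta(b,k)$ positions when recovering $Z_b(b^k)=\lfloor\vartheta(b)b^k\rfloor-\alpha(b,k)$. The plan is to show that, after picking the right representative for the first $k$ digits of $\vartheta(b)$ in base $b$, this borrow is absent; the number of unequal digits then equals the number of base-$b$ digits of the effective difference, which is already bounded by $\eta(b,k)\le\lfloor\log_b k+2\rfloor$ through Theorem 2. I would split into two cases according to whether $\vartheta(b)$ is exact in base $b$.

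\textbf{Exact case.} I would use the non-terminating representation $\vartheta(b)=\{0.d_1\ldots d_{t-1}(d_t-1)\overbrace{b-1}\}_b$ (cf.\ Lemma 4) and set $N_k:=\lfloor\vartheta(b)b^k\rfloor-1$. For $k$ sufficiently large (concretely $k\ge t+\eta(b,k)$, which fails only in finitely many easily-checked cases), the last $k-t$ digits of $N_k$ equal $b-1$, so subtracting $\tilde\alpha:=N_k-Z_b(b^k)=\alpha(b,k)-1$ from $N_k$ cannot cause any borrow; consequently the number of unequal digits equals the number of base-$b$ digits of $\tilde\alpha$, at most $\eta(b,k)\le\lfloor\log_b k\rfloor+2$.

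\textbf{Non-exact case.} Write $\vartheta(b)=1/q$ with $q=r_1(p_1-1)$, set $\eta=\eta(b,k)$, and let $L_k:=N_k\bmod b^{\eta}$ be the integer formed by the last $\eta$ digits of $N_k=\lfloor\vartheta(b)b^k\rfloor$. A short computation gives $L_k=\lfloor b^{\eta}(b^{k-\eta}\bmod q)/q\rfloor$, and since $\vartheta(b)$ is not exact, $q$ divides no power of $b$, so $b^{k-\eta}\bmod q\ge 1$. The plan is to combine this lower bound on $L_k$ with the refined estimate $\alpha(b,k)\le 1+\sigma_{p_1}(\beta^k)/q$ (with $\beta=b/p_1^{r_1}$, extracted from the proof of Theorem 2) to conclude $L_k\ge\alpha(b,k)$; this rules out any borrow and the conclusion follows as in the exact case.

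\textbf{Main obstacle.} The delicate situation is when $b^{k-\eta}\bmod q$ is close to $1$: then $L_k$ can be as small as $\lfloor b^{\eta}/q\rfloor$, only narrowly exceeding the worst case of $\alpha(b,k)$ precisely when $\eta$ attains the bound $\lfloor\log_b k+2\rfloor$. Closing the argument in that regime seems to require either a sharpening of the digit-sum estimate on $\sigma_{p_1}(\beta^k)$ that leverages the arithmetic of $q$ (perhaps via an averaging argument over residue classes of $k$ modulo the multiplicative order of $b$ in $(\mathbb{Z}/q\mathbb{Z})^\times$), or an identity linking $\sigma_{p_1}(\beta^k)$ and $b^k\bmod q$ that precludes the two adversarial extremes from occurring simultaneously. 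Such estimates are not standard, and I expect this to be the true source of difficulty behind the Conjecture remaining open.
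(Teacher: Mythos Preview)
The statement you are treating is labeled Conjecture~1 in the paper, and the paper gives no proof: the authors say explicitly that the strict inequality ``remains a conjecture'' based only on computational evidence. There is therefore no paper proof to compare your proposal against.

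Your proposal is not a complete proof either, and you are candid about that. The borrow-elimination strategy and the split into exact versus non-exact $\vartheta(b)$ are reasonable. In the exact case your argument is essentially correct: once $k\ge t+\eta(b,k)$ the trailing $(b-1)$'s in the non-terminating representation absorb the subtraction of $\tilde\alpha$ without propagating a borrow upward, so only the last $\eta(b,k)$ digits can differ. In the non-exact case your formula $L_k=\lfloor b^{\eta}(b^{k-\eta}\bmod q)/q\rfloor$ is correct and the lower bound $L_k\ge\lfloor b^{\eta}/q\rfloor$ follows from $q\nmid b^{k-\eta}$. The genuine gap is exactly where you place it: comparing $\lfloor b^{\eta}/q\rfloor$ with the upper estimate $1+\sigma_{p_1}(\beta^k)/q$ for $\alpha(b,k)$ is not decisive when $b^{k-\eta}\bmod q$ is near $1$ and $\eta$ is at its maximum, and nothing in the paper (or in standard digit-sum technology) supplies the missing correlation between $\sigma_{p_1}(\beta^k)$ and $b^k\bmod q$. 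Your diagnosis that this is the real obstruction is consistent with the conjecture being left open; the paper itself offers no further insight beyond the numerical evidence and the weaker bound of Theorem~2.
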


If $b$ is square-free, we can improve the bound given in Theorem 2.

\begin{prop}
Let $b=p_1\cdots p_s$ be a square-free integer ($s\geq 2$). Then $\vartheta(b)b^k-Z_b(b^k)\leq k(s-1)$ for every $k$. As a consequence $\eta(b,k)\leq \lfloor\log_b k+\log_b(s-1)+1\rfloor\leq\lfloor\log_b k +1.21\rfloor$.
\end{prop}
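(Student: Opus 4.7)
The plan is to sharpen the digit-sum estimate used in the proof of Theorem 2 by exploiting the fact that $b$ is squarefree. First, I would relabel the prime divisors so that $\vartheta(b) = 1/(p_1-1)$; since $b$ is squarefree, this amounts to choosing $p_1$ to be the largest prime factor of $b$. Arguing exactly as in the proof of Theorem 2, Lemma 1 yields $Z_b(b^k) = Z_{p_1}(b^k) = (b^k - \sigma_{p_1}(b^k))/(p_1 - 1)$ (the minimum in Lemma 1(3) is realized at $p_1$ because $1/(p_1-1)$ is the smallest of the coefficients $1/(p_i-1)$, while the correction terms $\sigma_{p_i}(b^k)/(p_i-1)$ are only logarithmic in $b^k$). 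Subtracting gives the clean identity
\[
\vartheta(b)\,b^k - Z_b(b^k) \;=\; \frac{\sigma_{p_1}(b^k)}{p_1 - 1}.
\]

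The heart of the argument is then to bound this digit sum efficiently. Setting $\beta := b/p_1 = p_2 \cdots p_s$, one has $b^k = p_1^k \beta^k$ and consequently $\sigma_{p_1}(b^k) = \sigma_{p_1}(\beta^k)$, since the factor $p_1^k$ only appends $k$ trailing zeros in base $p_1$. The key sharpening over Theorem 2 is the elementary observation that every $p_i$ with $i \geq 2$ is a prime strictly smaller than $p_1$, hence $p_i \leq p_1 - 1$, so that
\[
\beta \;\leq\; (p_1 - 1)^{s-1} \;<\; p_1^{s-1}.
\]
Raising to the $k$-th power, $\beta^k < p_1^{k(s-1)}$, which means that $\beta^k$ has at most $k(s-1)$ digits in base $p_1$, each bounded by $p_1 - 1$. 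Therefore $\sigma_{p_1}(\beta^k) \leq (p_1 - 1)\,k(s-1)$, and dividing by $p_1 - 1$ proves the first inequality.

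For the consequence, I would observe $\alpha(b,k) \leq \vartheta(b)\,b^k - Z_b(b^k) \leq k(s-1)$, so unwinding the definition of $\eta$ gives $\eta(b,k) \leq \lfloor \log_b(k(s-1)) + 1 \rfloor = \lfloor \log_b k + \log_b(s-1) + 1 \rfloor$. The final refinement to $\lfloor \log_b k + 1.21 \rfloor$ amounts to the uniform estimate $\log_b(s-1) \leq 0.21$ for every squarefree $b$ with $s$ prime factors, and this is the only step I expect to require real care: since such a $b$ is at least the $s$-th primorial, the near-tight cases $s = 3$ ($b \geq 30$, where $\log_{30} 2 \approx 0.204$) and $s = 4$ ($b \geq 210$, where $\log_{210} 3 \approx 0.206$) must be checked by direct computation, after which $s \geq 5$ follows comfortably from the rapid growth of primorials compared with $s-1$. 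Apart from this numerical verification and the implicit identification $Z_b(b^k) = Z_{p_1}(b^k)$ inherited from Theorem 2, the argument is entirely elementary.
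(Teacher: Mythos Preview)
Your proposal is correct and rests on the same key observation as the paper, namely that $\beta^k=(p_2\cdots p_s)^k<p_1^{k(s-1)}$ once $p_1$ is taken to be the largest prime factor, together with the identification $Z_b(b^k)=Z_{p_1}(b^k)$ and the consequence $\eta(b,k)\leq\lfloor\log_b k+\log_b(s-1)+1\rfloor$ with $\log_b(s-1)<0.21$.

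The packaging differs slightly: the paper does not pass through the digit-sum identity $\vartheta(b)b^k-Z_b(b^k)=\sigma_{p_1}(\beta^k)/(p_1-1)$ but instead writes out an explicit base-$p_1$ expansion $(p_2\cdots p_s)^k$ with coefficients $\epsilon_i$, applies Corollary~1 to split $Z_{p_1}(b^k)=(p_2\cdots p_s)^kZ_{p_1}(p_1^k)+Z_{p_1}((p_2\cdots p_s)^k)$, and then performs a short telescoping computation to reach the bound $k(s-1)$. Your route via Legendre's formula and the trivial estimate $\sigma_{p_1}(\beta^k)\leq(p_1-1)\cdot k(s-1)$ is more direct and arguably cleaner, but the underlying idea is identical.
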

\begin{proof}
We can suppose, without loss of generality, that $p_1$ is the greatest prime in the decomposition of $b$. Then $\displaystyle{\vartheta(b)b^k=\frac{b^k}{p_1-1}}$ and also $Z_b(b^k)=Z_{p_1}(b^k)$.

On the other hand, $(p_2\cdots p_s)^k<p_1^{k(s-1)}$, so we can find
$\epsilon_1,\dots,\epsilon_{k(s-1)-1}$ integers smaller that $p_1$
such that
$$\sum_{i=1}^{k(s-1)-1}(\epsilon_i-1)p_1^i\leq (p_2\dots
p_s)^k<\epsilon_1 p_1+\sum_{i=2}^{k(s-1)-1}(\epsilon_i-1)p_1^i.$$ As
a consequence $Z_{p_1}(p_2^k\dots
p_s^k)=\displaystyle{\sum_{i=1}^{k(s-1)-1}(\epsilon_i-1)\frac{p_1^i-1}{p_1-1}}$
and recall that, by Corollary 1, $Z_{p_1}(b^k)=(p_2\dots
p_s)^kZ_{p_1}(p_1^k)+Z_{p_1}(p_2^k\dots p_s^k)$.

Finally, putting all together:
\begin{align*}
\vartheta(b)b^k-Z_b(b^k)&=\frac{p_2^k\dots
p_s^k-\displaystyle{\sum_{i=1}^{k(s-1)-1}}(\epsilon_i-1)(p_1^i-1)}{p_1-1}\leq\\
&\leq\frac{\epsilon_1p_1+\displaystyle{\sum_{i=2}^{k(s-1)-1}}\epsilon_ip_1^i-1-\displaystyle{\sum_{i=1}^{k(s-1)-1}}(\epsilon_i-1)(p_1^i-1)}{p_1-1}=\\
&=\frac{\displaystyle{\sum_{i=1}^{k(s-1)-1}}(\epsilon_i-1)+(p_1-1)}{p_1-1}\leq
k(s-1).
\end{align*}

To end the proof it is enough to recall the definition of $\eta(b,k)$ and to observe that $\log_b(s-1)<0.21$.
\end{proof}

\begin{rem}
The previous proposition can be refined in some special cases. For
instance:
\begin{enumerate}
\item If $b$ is the product of 2 distinct primes; i.e., if $s=2$ is
the proposition, then $\eta(b,k)\leq\lfloor\log_b k+1\rfloor$.
\item If $b$ is square-free and $k$ is a power of $b$, then:
$$\eta(b,k)=\eta(b,b^m)\leq\lfloor m+1.21\rfloor=m+1=\lfloor\log_b k
+1\rfloor.$$
\end{enumerate}
\end{rem}

This remark motivates this final conjecture, which remains open,
that closes the paper.

\begin{conj}
If $b$ is a square-free integer, then $\eta(b,k)\leq\lfloor\log_b
k+1\rfloor$.
\end{conj}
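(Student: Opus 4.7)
The plan is to give a clean identity for $\alpha(b,k)$ in the square-free case and reduce the conjecture to a digit-sum estimate. Write $b=p_1\cdots p_s$ with $p_1$ the largest prime factor, so that $\vartheta(b)=1/(p_1-1)$ and $Z_b(b^k)=Z_{p_1}(b^k)$. Set $M:=b/p_1=p_2\cdots p_s$. Since the factor $p_1^k$ in $b^k=p_1^kM^k$ contributes only $k$ trailing zeros in base $p_1$, Legendre's formula gives
\[
Z_b(b^k)=\frac{b^k-\sigma_{p_1}(M^k)}{p_1-1}.
\]
Because $p_1^k\equiv 1\pmod{p_1-1}$, also $b^k\equiv M^k\pmod{p_1-1}$, and setting $r_k:=M^k\bmod(p_1-1)$ we get $\lfloor\vartheta(b)b^k\rfloor=(b^k-r_k)/(p_1-1)$. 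Subtracting yields the identity
\[
\alpha(b,k)=\frac{\sigma_{p_1}(M^k)-r_k}{p_1-1},
\]
which is a non-negative integer since $\sigma_{p_1}(M^k)\equiv M^k\equiv r_k\pmod{p_1-1}$. The conjecture becomes $\alpha(b,k)<b^{\lfloor\log_b k\rfloor+1}$, i.e. that $\alpha(b,k)$ has no more base-$b$ digits than $k$.

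I would first dispatch the easy regime $M\leq p_1$, which covers $s=2$ (already handled in the preceding remark) and some $s\geq 3$ cases. Here $M^k$ has at most $k+1$ base-$p_1$ digits, so $\sigma_{p_1}(M^k)\leq(p_1-1)(k+1)$ and hence $\alpha(b,k)\leq k+1$. A short boundary analysis—separating the case $k=b^m$ already covered in the preceding remark from the case $k\geq b^m+1$—then yields the desired bound on $\eta$.

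The delicate case is $M>p_1$, which can occur for $s\geq 3$ (e.g.\ $b=30$, where $M=6>5=p_1$). Then $M^k$ may have up to $\lfloor k\log_{p_1}M\rfloor+1$ base-$p_1$ digits, and the trivial estimate $\sigma_{p_1}(M^k)\leq(p_1-1)(1+\lfloor k\log_{p_1}M\rfloor)$ only recovers the bound $\alpha(b,k)\leq k(s-1)+1$ of the preceding proposition. This is not sharp enough: at $k=b^{m+1}-1$ (just below a power of $b$) we need $\alpha<b^{m+1}$, whereas the linear bound $k(s-1)$ exceeds $b^{m+1}$ as soon as $s\geq 3$. To close the gap one needs a non-trivial digit-sum bound of the form
\[
\sigma_{p_1}(M^k)\leq\gamma(p_1-1)\cdot k\log_{p_1}M+O(\log k)
\]
for some constant $\gamma<1$, i.e.\ a uniform statement that the base-$p_1$ digits of $M^k$ are, on average, bounded away from $p_1-1$.

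The main obstacle is precisely this digit-sum estimate. Non-trivial lower bounds on the number of small base-$p_1$ digits of the powers $M^k$ form a classical and notoriously hard problem: the known results provide only logarithmic savings over the trivial upper bound and rely on transcendence-theoretic machinery, which seems too weak (and too costly) to deliver the bound above. A more tractable route might be to exploit explicitly the multiplicative structure of $M^k\bmod p_1^j$—forbidden digit patterns arising from $\gcd(p_1,M)=1$, or a careful analysis of the carries produced when passing from $M^k$ to $M^{k+1}$—but I do not see how to make such an argument quantitative enough. Absent genuinely new input of this kind, the conjecture seems to demand ingredients beyond elementary manipulations of Legendre's formula, which is consistent with the authors having left it open.
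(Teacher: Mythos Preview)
The statement you are addressing is Conjecture~2, which the paper explicitly leaves open; there is no proof in the paper to compare your proposal against. Your write-up is accordingly not a proof but an honest partial attack, and you signal this clearly in your final paragraph.

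Your reduction is correct and in fact sharper than anything the paper records. The identity
\[
\alpha(b,k)=\frac{\sigma_{p_1}(M^k)-r_k}{p_1-1},\qquad M=b/p_1,\quad r_k=M^k\bmod(p_1-1),
\]
is a clean reformulation that the paper does not isolate; it makes transparent why the square-free case is governed entirely by the base-$p_1$ digit sum of $M^k$. Your treatment of the regime $M<p_1$ is essentially complete (indeed $M\neq p_1$ since $b$ is square-free, so $M^k<p_1^k$ has at most $k$ base-$p_1$ digits, giving $\alpha(b,k)\le k$ directly and hence the conjectured bound without the boundary analysis you mention). This already subsumes the paper's Remark covering $s=2$.

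Your diagnosis of the obstruction when $M>p_1$ is also accurate: the conjecture at $k=b^{m+1}-1$ demands $\alpha<b^{m+1}$, while the linear bound $\alpha\le k(s-1)$ from Proposition~7 gives only $\alpha\lesssim(s-1)b^{m+1}$, which is insufficient for $s\ge3$. The missing ingredient is exactly a sub-maximal estimate on $\sigma_{p_1}(M^k)$, and you are right that nothing in the elementary toolkit of the paper supplies this. One minor caveat: both you and the paper silently assume $Z_b(b^k)=Z_{p_1}(b^k)$ for all $k$ when $p_1$ is the largest prime factor; this is plausible and easy to check in examples, but neither source verifies it in general, and a full proof of the conjecture would need to address it.
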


\bibliography{./refgrauoller}

\begin{thebibliography}{1}

\bibitem{GRI}
Harriet Griffin.
\newblock {\em Elementary theory of numbers}.
\newblock McGraw-Hill Book Company, Inc., New York-Toronto-London, 1954.

\bibitem{GUP}
Hansraj Gupta.
\newblock {\em Selected topics in number theory}.
\newblock Abacus Press, Tunbridge Wells, 1980.

\bibitem{HAR}
David~S. Hart, James~E. Marengo, Darren~A. Narayan, and David~S. Ross.
\newblock On the number of trailing zeros in {$n$}!
\newblock {\em College Math. J.}, 39(2):139--145, 2008.

\bibitem{LEG}
A.M. Legendre.
\newblock {\em Essai sur la theorie des nombres}.
\newblock Cambridge University Press, New York, 2009.

\bibitem{NIV}
Ivan Niven, Herbert~S. Zuckerman, and Hugh~L. Montgomery.
\newblock {\em An introduction to the theory of numbers}.
\newblock John Wiley \& Sons Inc., New York, fifth edition, 1991.

\bibitem{OLL}
A.M. Oller-Marc\'{e}n.
\newblock A new look at the trailing zeroes of $n!$.
\newblock {\em arXiv:0906.4868v1 [math.NT]}.

\bibitem{SCH}
B.~Schmuland.
\newblock Shouting factorials.
\newblock {\em Pi in the ski}, 7:10--12, 2003.

\bibitem{TRE}
M.L. Treuden.
\newblock Frequencies of digits in factorials: an experimental approach.
\newblock {\em College Math. J.}, 25(1):48--55, 1994.

\end{thebibliography}
  \bibliographystyle{plain}
\end{document}